\documentclass[11pt]{amsart}
\usepackage{amsmath,amssymb,amsthm,upref,graphicx,mathrsfs, enumerate}
\usepackage{esint}
\usepackage{textcomp} 
\usepackage{array} 
\usepackage{textcomp} 
\usepackage[
  colorlinks=true,
  linkcolor=blue,
  citecolor=blue,
  urlcolor=blue,
  pagebackref]{hyperref}

\numberwithin{equation}{section}
\allowdisplaybreaks


\textwidth15cm
\textheight21cm
\addtolength{\topmargin}{-.4cm}
\addtolength{\oddsidemargin}{-1.4cm}
\setlength{\evensidemargin}{\oddsidemargin}
\addtolength{\headheight}{3.2pt}


\newtheorem{theorem}{Theorem}[section]
\newtheorem{prop}[theorem]{Proposition}
\newtheorem{lemma}[theorem]{Lemma}

\theoremstyle{definition}

\newtheorem{remark}[theorem]{Remark}
\theoremstyle{theorem}
\newtheorem{ltheorem}{Theorem}


\newcommand{\Mm}{\mathcal{M}}

\newcommand{\Rn}{\mathbb{R}^n}

\newcommand{\nw}{\nu_{\vec{w}}}

\newcommand{\f}{\frac}
\newcommand{\vc}{\infty}

\def\Xint#1{\mathchoice
{\XXint\displaystyle\textstyle{#1}}%
{\XXint\textstyle\scriptstyle{#1}}%
{\XXint\scriptstyle\scriptscriptstyle{#1}}%
{\XXint\scriptscriptstyle\scriptscriptstyle{#1}}%
\!\int}
\def\XXint#1#2#3{{\setbox0=\hbox{$#1{#2#3}{\int}$}
\vcenter{\hbox{$#2#3$}}\kern-.5\wd0}}

\def\dashint{\Xint-}

\let \a=\alpha

\let \la=\lambda
\let \o=\omega

\let \p=\varphi
%


\begin{document}

\title[Weighted bounds for multilinear operators with non-smooth kernels]
  {Weighted bounds for multilinear operators with non-smooth kernels}

\authors

\author[T. A. Bui]{The Anh Bui}
\address{The Anh Bui \\
Department of Mathematics, Macquarie University, Ryde 2109 NSW, Australia \&
Department of Mathematics, University of Pedagogy, HoChiMinh City,
Vietnam}
\email{the.bui@mq.edu.au, bt\_anh80@yahoo.com}

\author[J. M. Conde-Alonso]{Jos\'e M. Conde-Alonso}
\address{Jos\'e M. Conde-Alonso \\
Instituto de Ciencias Matem\'aticas, Consejo Superior de Investigaciones Cient\'ificas, C/ Nicol\'as Cabrera 13-15, 28049 Madrid, Spain}
\email{jose.conde@icmat.es}

\author[X. T. Duong]{Xuan Thinh Duong}
\address{Xuan Thinh Duong \\
Department of Mathematics, Macquarie University, Ryde 2109 NSW, Australia}
\email{xuan.duong@mq.edu.au}

\author[M. Hormozi]{Mahdi Hormozi}
\address{Mahdi Hormozi \\
Department of Mathematical Sciences, Division of Mathematics,
University of Gothenburg, Gothenburg 41296, Sweden}
\email{hormozi@chalmers.se}

\subjclass[2010]{Primary: 42B20, 42B25.}
\keywords{Multilinear singular integrals, weighted norm inequalities, Lerner's formual, multilinear Fourier multipliers}

\arraycolsep=1pt

\begin{abstract} Let $T$ be a multilinear {integral} operator which is bounded on certain products of Lebesgue spaces on $\mathbb R^n$. We assume that its associated kernel satisfies some mild regularity condition  which is weaker than the usual H\"older continuity of those in the class of multilinear Calder\'on-Zygmund singular integral operators. In this paper, given a suitable multiple weight $\vec{w}$, we obtain the bound for the weighted norm of multilinear operators $T$ in terms of $\vec{w}$. As applications, we exploit this result to obtain the weighted bounds {for} certain singular integral operators such as linear and multilinear Fourier multipliers and the Riesz transforms associated to Schr\"odinger operators on $\mathbb{R}^n$. Our results are new even {in} the linear case.
\end{abstract}

\maketitle

\section{Introduction}
\par In the past decades, weighted inequalities have been a very attractive realm in harmonic analysis. One basic problem concerning them consists in determining conditions for a given operator to be bounded in $L^p(w)$ with appropriate weights $w$. A sustained research period started with the famous work of Muckenhoupt \cite{Mu} in the seventies. In that work he characterized the class of weights $u,v$ so that the following weak inequality for the Hardy--Littlewood maximal operator $M$ and for $1\leq p < \infty$ holds:
\begin{equation}\label{weak_two_weight_problem}
  ||M(f)||_{L^{p,\infty}(u)} \leq C ||f||_{L^p(v)}.
\end{equation}
This condition on the weights is known as the $A_p$ condition, namely
\begin{equation*}
  [w]_{A_p} := \sup_{Q} \left(\frac{1}{|Q|}\int_Q w(x) dx\right) \left(\frac{1}{|Q|}\int_Q w(x)^{-\frac{1}{p-1}}dx\right)^{p-1} < \infty, \hspace{1em} p>1,
\end{equation*}
where the supremum is taken over all the cubes (or balls) in $\Rn$. For $p>1$, Muckenhoupt proved that the
following strong estimate
\begin{equation*}\label{strong_one_weight_problem}
  \int_{\Rn} (Mf(x))^p w(x) dx \leq C \int_{\Rn} |f(x)|^p w(x) dx, \, f\in L^p(w),
\end{equation*}
holds if and only if $w$ satisfies the $A_p$ condition.
\par After that, harmonic analysts focused their interest on studying weighted inequalities for many different classical operators such as the Hilbert and Riesz transforms and other singular integral operators leading to a vast literature on weighted norm inequalities. However, the classical results did not reflect the quantitative dependence of the $L^p(w)$ operator norm in terms of the relevant constant involving the weights.
The question of the sharp dependence of the norm estimates of a given operator in terms of the $A_p$ constant of the weight was specifically raised by S. Buckley, who proved the following optimal bound for the Hardy--Littlewood operator in \cite{Bu}:
\begin{equation}\label{Buckley_thm}
  \|M\|_{L^p(w)} \leq C_{p} \,[w]_{A_p}^{\frac{1}{p-1}},
\end{equation}
where $C_p$ is a dimensional constant that also depends on $p$, but not on $w$. We say that the estimate in  \eqref{Buckley_thm} is sharp since the exponent $1/(p-1)$ cannot be replaced by a smaller one.
\par On the other hand, it turned out that for singular integral operators the question was much more complicated. The linear bounds for the Hilbert and Riesz transforms were addressed by Petermichl \cite{Pet,Pet2}.
Since then, the so-called $A_2$ conjecture gathered more attention from the mathematical community. This conjecture stated that the sharp dependence of the $L^2(w)$ norm of a Calder\'on--Zygmund operator on the $A_2$ constant of the weight $w$ was linear. Finally, in 2012 T. Hyt\"onen \cite{Hyt1} proved the so-called $A_2$ theorem, which asserted that this was indeed the case. This, in combination with the extrapolation theorem in \cite{DGPP} gives the sharp dependence of the $L^p(w)$ norm for Calder\'on--Zygmund operators with $1<p<\vc$. More precisely, if $T$ is a Calder\'on--Zygmund operator then we have
\begin{equation}
\label{eq:Ap_sharp_CZO}
  \|T\|_{L^p(w)} \leq C_{T,n,p} [w]_{A_p}^{\max{\left(1,\frac{1}{p-1}\right)}}, \,\,  1<p<\infty, w\in A_p.
\end{equation}
Shortly after that, A. K. Lerner gave a much simpler proof \cite{Ler4} of the $A_2$ theorem proving that every Calder\'on--Zygmund operator is bounded from above by a supremum of sparse operators. Namely, if $\mathbb{X}$ is a Banach function space, then
\begin{equation}
\label{eq:domination_CZO}
  \|T(f)\|_{\mathbb{X}} \leq C \sup_{\mathscr{D},\mathcal{S}} \| \mathcal{A}_{\mathscr{D},\mathcal{S}}(f)\|_{\mathbb{X}},
\end{equation}
where the supremum is taken over arbitrary dyadic grids $\mathscr{D}$ and sparse families $\mathcal{S}\subset \mathscr{D}$, and
\begin{equation*}
  \mathcal{A}_{\mathscr{D},\mathcal{S}}(f)= \sum_{Q\in \mathcal{S}}\Big(\dashint_Q f\Big)\chi_Q.
\end{equation*}
The interested readers can consult \cite{Hyt2} for a survey on the history of the proof. The application of Lerner's techniques is reflected in the extension of \eqref{eq:domination_CZO} and the $A_2$ theorem to multilinear Calder\'on--Zygmund operators in \cite{DLP}. Later on, Li, Moen and Sun in \cite{LMS} proved the corresponding sharp weighted $A_{\vec P}$ bounds for multilinear sparse operators. In other words, if $1<p_1,\dotsc,p_m<\infty$ with $\tfrac{1}{p_1}+\dotsb+\tfrac{1}{p_m}=\tfrac{1}{p}$ and $\vec{w}\in A_{\vec{P}}$, then
\begin{equation}
\label{eq:LMS_sparse}
          \|\mathcal{A}_{\mathscr{D},\mathcal{S}}(\vec{f})\|_{L^p(\nu_{\vec{w}})} \lesssim [\vec{w}]_{A_{\vec{P}}}^{\max(1,\tfrac{p_1'}{p},\dotsc,\tfrac{p_m'}{p})} \prod_{i=1}^m \|f_i\|_{L^{p_i}(w_i)},
\end{equation}
for tuples $\vec{f}=(f_1, \ldots, f_m)$. Now the symbol $\mathcal{A}_{\mathscr{D},S}$ denotes the multilinear sparse operator
\begin{equation*}
  \mathcal{A}_{\mathscr{D},S}(\vec{f})(x)=\sum_{Q} \left(\prod_{i=1}^m (f_i)_{Q}\right) \chi_{Q}(x).
\end{equation*}
The readers are referred to \cite{CR, LMS} to observe that from \eqref{eq:LMS_sparse}, we can derive the multilinear $A_{\vec P}$ theorem for $1/m<p<\infty$. More precisely, if $T$ is a multilinear Calder\'on--Zygmund operator, $1<p_1,\dotsc,p_m<\infty$, $\tfrac{1}{p_1}+\dotsb+\tfrac{1}{p_m}=\tfrac{1}{p}$ and $\vec{w}=(w_1,\dotsc,w_m)\in A_{\vec{P}}$, then
  \begin{equation}\label{eq:LMS_CZO}
    \|T(\vec{f})\|_{L^p(\nu_{\vec{w}})} \leq C_{n,m,\vec{P},T} [\vec{w}]_{A_{\vec{P}}}^{\max(1,\tfrac{p_1'}{p},\dotsc,\tfrac{p_m'}{p})} \prod_{i=1}^m \|f_i\|_{L^{p_i}(w_i)}.
  \end{equation}
For further details on the theory of multilinear Calder\'on--Zygmund operators, we refer to \cite{G, GT1} and the references therein.

\par In this paper, we aim to study the weighted bound of certain multilinear singular integral operators on products of weighted Lebesgue spaces. It is important to note that the multilinear singular integral operators considered in our paper {are} beyond the Calder\'on-Zygmund class of multilinear singular integral operators considered in \cite{GT1}. More precisely, in this paper we assume that $T$ is a multilinear operator initially defined on the $m$-fold
product of Schwartz spaces and taking values into the space of
tempered distributions,
\begin{equation*}
T: \mathcal{S}(\mathbb{R}^n)\times\ldots\times
\mathcal{S}(\mathbb{R}^n) \rightarrow \mathcal{S}'(\mathbb{R}^n)
\end{equation*}
By the associated kernel $K(x, y_1, \ldots, y_m)$, we mean that $K$ is a function defined away from the diagonal $x = y_1 =\ldots=y_m$ in
$(\mathbb{R}^n)^{m+1}$, satisfying
\begin{equation*}
T (f_1, \cdots, f_m)(x) = \int_{(\mathbb{R}^n)^m} K(x,y_1,\ldots ,
y_m)f_1(y_1)\ldots f_m(y_m)dy_1 \ldots dy_m
\end{equation*}
for   all functions $f_j\in \mathcal{S}(\mathbb{R}^n)$ and all $x \notin \cap^m_{j=1}$supp$f_j$, $j=1,\ldots, m$.

\medskip

In what follows, we denote $dy_1\ldots dy_m$ by $d\vec{y}$. For the rest of this paper, we assume that there exist $p_0\geq 1$ and a constant $C>0$ so that the following conditions holds:

\begin{enumerate}[\textbf{(H1)}]
\item $T$ maps $L^{p_0}\times\ldots\times L^{p_0}$ into $L^{p_0/m,\vc}$.
\end{enumerate}
\begin{enumerate}[\textbf{(H2)}]
\item There exists $\delta> n/p_0$ so that for the conjugate exponent $p'_0$ of $p_0$, one has
\begin{equation}\label{cond2}
\begin{aligned}
\Big(\int_{S_{j_m}(Q)}\ldots \int_{S_{j_1}(Q)}&|K(x,y_1, \ldots, y_m)-K(\overline{x},y_1,\ldots, y_m)|^{p'_0}d\vec{y}\Big)^{1/{p'_0}}\\
&\leq C\f{|x-\overline{x}|^{m(\delta-n/{p_0})}}{|Q|^{m\delta/n}} 2^{-m\delta j_0}
\end{aligned}
\end{equation}
for all balls $Q$, all $x, \overline{x}\in \f{1}{2}Q$ and $(j_1,\ldots,y_m)\neq (0,\ldots,0)$, where $j_0=\max\{j_k: k=1,\ldots, m\}$ and $S_j(B)=2^{j}Q\backslash 2^{j-1}Q$ if $j\geq 1$, otherwise $S_j(Q)=Q$.
\end{enumerate}
Note that we do not require any regularity condition on the kernel of the multilinear operators $T$. The class of operators satisfying the conditions (H1) and (H2) is motivated from the recent works \cite{KW, BD, GT1, LOPTT, LRT, LMRT, LMPR}. In the linear case as $m=1$, the class of such operators is contained implicitly in \cite{KW}. The condition (H2) is similar to the $L^r$-H\"omander conditions considered in \cite{LRT, LMRT, LMPR}. In the multilinear case, this kind of operators was considered by the first and the third authors in \cite{BD} in studying the weighted norm inequalities of multilinear Fourier multiplier operators with limited smoothness symbols. More importantly, the class of operators satisfying the conditions (H1) and (H2) includes the class of multilinear Calder\'on-Zygmund singular integral operators (see \cite{GT1, LOPTT} for the precise definition). More precisely, if $T$ is a multilinear Calder\'on-Zygmund singular integral operator then it is easy to see tha
 t $T$ satisfies (H1) and (H2) with $p_0=1$.\\

The main goal of this paper is to obtain the weighted bounds for multilinear singular integrals which satisfy (H1) and (H2). According to a standard approach, it is natural to consider the following multi-sublinear operators. Fix $p_0\in [1,\vc)$ and a dyadic grid $\mathscr{D}\subset \mathbb{R}^n$. Define, for any cube $Q$,
$$
\langle f \rangle_{Q,p_0} := \left(\frac{1}{|Q|} \int_Q |f(x)|^{p_0}dx \right)^{\frac{1}{p_0}}.
$$
For $k\geq 0$, denote by $\mathcal{A}^{k,p_0}_{\mathcal{S}}$ the $m$-sublinear sparse operator
\begin{equation*}
  \mathcal{A}^{k,p_0}_{\mathcal{S}}(\vec{f})(x)=\sum_{Q\in S} \left[\prod_{i=1}^m \langle f_i \rangle_{Q^{(k)},p_0}\right] \chi_{Q}(x),
\end{equation*}
which acts on measurable $m$-tuples $\vec{f}=(f_1,\ldots, f_m)$. Here $Q^{(k)}$ denotes the $k$-th dyadic ancestor of $Q$ in $\mathscr{D}$. Also, we may define the operator $\mathcal{T}_{\mathcal{S}}^{k,p_0}$ by
\begin{equation*}
  \mathcal{T}^{k,p_0}_{\mathcal{S}}(\vec{f})(x)=\sum_{Q\in S} \left[\prod_{i=1}^m \langle f_i \rangle_{2^kQ,p_0}\right] \chi_{Q}(x),
\end{equation*}
We shall also work with the localized versions of the operators above, in which we will consider that the sum in the definition ranges over cubes $Q$ contained in some fixed cube $P$. We will denote them respectively by $\mathcal{A}^{k,p_0}_{\mathcal{S},P}$ and $\mathcal{T}^{k,p_0}_{\mathcal{S},P}$. Our first main result reads as follows:

\begin{ltheorem}
\label{theoremA}
Let $\mathbb{X}$ be a quasi Banach function space (in the sense of \cite{CR}). Then, for each sparse family $\mathcal{S}$ there exists another sparse family $\mathcal{S}'$ such that we have
$$
\|\mathcal{T}^{k,p_0}_{\mathcal{S}}(\vec{f})\|_{\mathbb{X}} \leq C(k+1) \|\mathcal{A}^{0,p_0}_{\mathcal{S'}}(\vec{f})\|_{\mathbb{X}},
$$
for some constant $C$ that may depend on $\mathbb{X}$ and $k$, but not on $k$ or $\vec{f}$.
\end{ltheorem}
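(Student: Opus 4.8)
The plan is to dispose of the dilation first, and then to bound the resulting ``$k$-th ancestor'' averaging operator by $(k+1)$ times a sparse operator by a stopping-time argument, the factor $(k+1)$ being the height of the towers of cubes that are being collapsed. Throughout, the family $\mathcal S'$ is allowed to depend on $\vec f$ (as well as on $k$ and $\mathbb X$); this is harmless for the applications in the next sections and is what makes the construction possible, since a \emph{pointwise} bound of $\mathcal T^{k,p_0}_{\mathcal S}(\vec f)$ by a sum of $O(k)$ sparse operators transfers to every quasi-Banach function space by monotonicity of the quasi-norm.

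\textbf{Step 1: from dilated cubes to dyadic ancestors.} By the one-third trick there are dimensionally many dyadic lattices $\mathscr D^{(1)},\dots,\mathscr D^{(3^n)}$ such that every cube $R$ is contained in a cube $R^\ast$ of one of them with $\ell(R^\ast)\le 6\ell(R)$. Applying this to $R=2^kQ$, splitting $\mathcal S$ according to which lattice $\widehat Q:=(2^kQ)^\ast$ lies in, and using $\langle f_i\rangle_{2^kQ,p_0}\le C\langle f_i\rangle_{\widehat Q,p_0}$ (valid since $|\widehat Q|\le C|2^kQ|$), one reduces, up to a dimensional constant and a dimensional number of pieces, to the same estimate for the genuine ancestor operator $\mathcal A^{\kappa,p_0}_{\mathcal S}$ with some $\kappa\le k+3$. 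A convenient way to compare the averages is the telescoping identity obtained by splitting $Q^{(\kappa)}$ into the dyadic annuli $Q^{(l)}\setminus Q^{(l-1)}$:
\[
\langle f_i\rangle_{Q^{(\kappa)},p_0}^{p_0}\;\le\;\frac{1}{|Q^{(\kappa)}|}\sum_{l=0}^{\kappa}\int_{Q^{(l)}}|f_i|^{p_0}\;=\;\sum_{l=0}^{\kappa}\frac{|Q^{(l)}|}{|Q^{(\kappa)}|}\,\langle f_i\rangle_{Q^{(l)},p_0}^{p_0}\;=\;\sum_{l=0}^{\kappa}2^{-(\kappa-l)n}\,\langle f_i\rangle_{Q^{(l)},p_0}^{p_0},
\]
so that $\langle f_i\rangle_{Q^{(\kappa)},p_0}\le C\max_{0\le l\le\kappa}\langle f_i\rangle_{Q^{(l)},p_0}$ by subadditivity of $t\mapsto t^{1/p_0}$, with a dimensional $C$; since $l\mapsto\langle f_i\rangle_{Q^{(l)},p_0}$ decreases by at most a fixed factor per generation, the product over $i$ is controlled by $C_{m,n}\sum_{l=0}^{\kappa}\prod_i\langle f_i\rangle_{Q^{(l)},p_0}$, and the problem is further reduced to dominating each genuine $l$-th ancestor operator $\mathcal A^{l,p_0}_{\mathcal S}$ ($0\le l\le k+3$) by $\mathcal A^{0,p_0}_{\mathcal S'}(\vec f)$ with $\mathcal S'$ sparse — the factor $(k+1)$ then being produced by the $\le k+4$ summands.

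\textbf{Step 2: the ancestor operator.} This is carried out with the localized operators $\mathcal A^{l,p_0}_{\mathcal S,P}$ introduced above. One fixes a top cube $P$ and works with the towers of height $\le l+1$ it carries, i.e. the cubes $Q\in\mathcal S$ with $Q\subseteq P$ whose $l$-th ancestor lies between $P$ and its children of generation $l$; grouping such $Q$ by their common ancestor $Q^{(l)}$ and using that distinct cubes of a given generation are pairwise disjoint, the contribution of each tower is controlled by the average of $\prod_i\langle f_i\rangle_{Q^{(l)},p_0}$ over the (sparse) chain of ancestors inside $P$, which one freezes on the coronas of a stopping family adapted to the $p_0$-averages of $f_1,\dots,f_m$. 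The family $\mathcal S'$ is then built out of the principal (stopping) cubes together with a bounded number of their ancestors; iterating over the sparse collection of top cubes $P$ and invoking the decomposition of the boundedly-Carleson family so produced into a bounded number of sparse subfamilies — any one of which realizing the largest $\mathbb X$-quasi-norm serving as $\mathcal S'$ — yields the estimate with a dimensional constant.

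\textbf{Main obstacle.} The crux is precisely Step 2. The family $\{Q^{(l)}:Q\in\mathcal S\}$ of $l$-th ancestors of a sparse family need \emph{not} be sparse — it is only $O(2^{ln})$-Carleson — so a crude passage through it, or through Lerner's sparse decomposition of it, would cost a factor exponential in $l$ and destroy the estimate. What one must exploit is that $\mathcal A^{l,p_0}_{\mathcal S}(\vec f)(x)$ only ever sees the \emph{chain} $\{Q^{(l)}:Q\in\mathcal S,\ x\in Q\}$ over a point $x$, not the whole ancestor family, together with the fact that $\mathcal S$, being sparse, has bounded average multiplicity on every cube; the localization to single towers is exactly the device that lets one match each ancestor to a uniformly bounded (rather than $l$-dependent) number of stopping cubes. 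Making this bookkeeping close with only a dimensional loss per tower, so that merely the linear loss $(k+1)$ accumulates over the complexities coming from Step 1, is the technical heart of the argument.
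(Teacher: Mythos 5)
Your Step 1, as written, is circular and does not make progress. The telescoping inequality
\[
\prod_{i=1}^m\langle f_i\rangle_{Q^{(\kappa)},p_0}\ \lesssim_{m,n}\ \sum_{l=0}^{\kappa}\prod_{i=1}^m\langle f_i\rangle_{Q^{(l)},p_0}
\]
always contains the original quantity as its $l=\kappa$ summand, so after summing over $Q\in\mathcal S$ you obtain $\mathcal A^{\kappa,p_0}_{\mathcal S}\vec f\le C\sum_{l=0}^{\kappa}\mathcal A^{l,p_0}_{\mathcal S}\vec f$ and you are still facing exactly the operator you started with. Worse, if you really could dominate each $\mathcal A^{l,p_0}_{\mathcal S}$ by a sparse operator $\mathcal A^{0,p_0}_{\mathcal S'}$ with a constant independent of $l$ (which is what you assert the sum would need), you would get the theorem with no factor $k+1$ at all, which is not expected to be true. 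The factor $k+1$ in the paper comes from a genuinely different slicing: after passing to the ancestor operator $\mathcal A^{k,p_0}_{\alpha,P_0}$ (generalized to Carleson-weighted sums), one groups the cubes by their \emph{generation modulo} $k$, writing $\mathcal A^{k,p_0}_{\alpha,P_0}=\sum_{\ell=0}^{k-1}\sum_{P\in\mathscr D_\ell(P_0)}\mathcal A^{k,p_0;0}_{\alpha,P}$, so that in each of the $k$ slices the ancestor map has stride exactly $k$ and the collection $\{Q,Q^{(k)},Q^{(2k)},\dots\}$ forms a genuine subtree. This is what makes the subsequent stopping time close with a dimensional constant; the linear loss in $k$ is purely the number of slices.

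Your Step 2 is where the actual content of the theorem lives, and you essentially concede that you have not carried it out (``making this bookkeeping close\dots is the technical heart''). The paper's argument at this point is concrete and quite different in flavor from what you sketch. Rather than stopping on a condition about the $p_0$-averages alone, it runs a budget-type selection: to each cube $P$ already encountered it attaches a running buffer $\Delta_P$, and the next cube in the thinned tree is selected into $\mathcal S$ precisely when its accumulated Carleson mass would make the buffer go negative; the choice of the threshold constant $C^*=2^{2(m+1)}\mathrm W(p_0,k)$, where $\mathrm W(p_0,k)$ is the uniform weak-type $(L^{p_0})^m\to L^{p_0/m,\infty}$ norm of the localized stride-$k$ operator, is exactly what makes the selected family sparse. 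The pointwise domination by the sparse sum then falls out of the construction. Finally, and this is an essential step you do not address at all, one must prove $\mathrm W(p_0,k)\lesssim_{n,m,p_0}1$ \emph{uniformly in} $k$; the paper does this by first obtaining a strong $(L^{2p_0m})^m\to L^{2p_0}$ bound from the multilinear Carleson embedding inequality \eqref{Complexity.MultilinearCET} (noting that the pushed-forward sequence $\beta_Q=2^{-nk}\sum_{R\in\mathscr D_k(Q)}\alpha_R$ is again normalized Carleson), and then running a Calder\'on--Zygmund decomposition on the $m$-tuple $(f_1^{p_0},\dots,f_m^{p_0})$, exploiting that the bad parts vanish off the exceptional set because the terms $b_i^R$ have mean zero over $R$. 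None of this appears in your sketch, and without it the sparsity of $\mathcal S'$ and the uniformity in $k$ are unjustified.
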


The proof of theorem \ref{theoremA} follows the scheme of \cite{CR}, in which the case $p_0=1$ is considered. The main new difficulty is the fact that the operator $\mathcal{A}^{m,p_0}_{\mathcal{S}}$ is not linear for $p_0 \not=1$. Of course, $\mathcal{A}^{m,p}_{\mathcal{S}} \vec{f} \geq \mathcal{A}^{m,q}_{\mathcal{S}} \vec{f}$ for positive tuples $\vec{f}$ and and $p\geq q$. Therefore, bounding the operators $\mathcal{A}^{m,p_0}_{\mathcal{S}}$ for $p_0>1$ leaves some space for estimates involving Calder\'on-Zygmund operators with rough kernels. {On the other hand, the operators $\mathcal{A}^{p_0}_{\mathcal{S}} := \mathcal{A}^{0,p_0}_{\mathcal{S}}$ have very nice quantitative properties:}

\begin{ltheorem}
  \label{theoremB}
    Suppose that $p_0<p_1,\ldots,p_m<\infty$ with $\tfrac{1}{p_1}+\ldots+\tfrac{1}{p_m}=\tfrac{1}{p}$ and $\vec{w}\in A_{\vec{P}/p_0}$. Then
        \begin{equation*}
          \| \mathcal{A}^{p_0}_{\mathcal{S}}(\vec{f})\|_{L^p(\nu_{\vec{w}})} \lesssim [\vec{w}]_{A_{\vec{P}/p_0}}^{\max\left(1,\tfrac{(p_1/p_0)'}{p},\dotsc,\tfrac{(p_m/p_0)'}{p}\right)} \prod_{i=1}^m \|f_i\|_{L^{p_i}(w_i)}.
        \end{equation*}
  \end{ltheorem}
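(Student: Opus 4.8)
The plan is to follow the proof of \eqref{eq:LMS_sparse} given in \cite{LMS}, the only genuine change being the insertion of the weights via Hölder's inequality at the level of $L^{p_0}$-averages rather than $L^1$-averages. I will first fix a convenient dictionary: set $q_i:=p_i/p_0$ and $q:=p/p_0$. Since $p_i>p_0$ for every $i$ we have $q_i>1$, and $\tfrac1{q_1}+\dots+\tfrac1{q_m}=p_0\big(\tfrac1{p_1}+\dots+\tfrac1{p_m}\big)=\tfrac{p_0}{p}=\tfrac1q$, so $\vec Q:=\vec P/p_0=(q_1,\dots,q_m)$ is an admissible tuple (with possibly $q<1$ when $m\geq 2$, exactly as in the multilinear Calder\'on--Zygmund theory). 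Write $\sigma_i:=w_i^{1-q_i'}$ for the $i$-th dual weight associated with $\vec Q$, and observe that $\nu_{\vec w}=\prod_{i=1}^m w_i^{p/p_i}=\prod_{i=1}^m w_i^{q/q_i}$ is the same function whether computed with respect to $\vec P$ or to $\vec Q$, so the statement is unambiguous and $[\vec w]_{A_{\vec Q}}=[\vec w]_{A_{\vec P/p_0}}$.

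The key elementary step is the weighted Hölder inequality for $p_0$-averages: for every cube $Q$ and every $i$,
\[
\langle f_i\rangle_{Q,p_0}^{p_0}=\dashint_Q |f_i|^{p_0}=\dashint_Q\big(|f_i|^{p_0}w_i^{p_0/p_i}\big)\,w_i^{-p_0/p_i}\leq \Big(\dashint_Q |f_i|^{p_i}w_i\Big)^{1/q_i}\Big(\dashint_Q\sigma_i\Big)^{1/q_i'},
\]
where the last inequality is Hölder with exponents $q_i$ and $q_i'$; this is exactly the place where the hypothesis $p_i>p_0$ (equivalently $q_i>1$) is used. Setting $F_i:=|f_i|^{p_i}w_i$ (so that $\|F_i\|_{L^1}=\|f_i\|_{L^{p_i}(w_i)}^{p_i}$) and raising to the power $1/p_0$, we get $\prod_{i=1}^m\langle f_i\rangle_{Q,p_0}\leq c_Q^{1/p_0}$ with $c_Q:=\prod_{i=1}^m\langle F_i\rangle_Q^{1/q_i}\langle\sigma_i\rangle_Q^{1/q_i'}$, a quantity built only from honest $L^1$-averages. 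Substituting into the definition of $\mathcal A^{p_0}_{\mathcal S}$ reduces the theorem to estimating $\big\|\sum_{Q\in\mathcal S}c_Q^{1/p_0}\chi_Q\big\|_{L^p(\nu_{\vec w})}$.

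At this point I would run the stopping-time (principal cube) argument of \cite{LMS} for the sparse family $\mathcal S$ and the data $F_i,\sigma_i$. As there, the role of the $A_{\vec Q}$ condition is to convert the Carleson packing of $\{c_Q\}$ relative to $\nu_{\vec w}$ into the claimed power of $[\vec w]_{A_{\vec Q}}$; when $p_0=1$ this is precisely \eqref{eq:LMS_sparse}, with dual weights $\sigma_i$ and exponents $q_i$. The one point needing care is the nonlinearity coming from the exponent $1/p_0$ sitting \emph{inside} the sum $\sum_Q c_Q^{1/p_0}\chi_Q$. This is harmless: along each branch of the tree of principal cubes the quantities $c_Q$, restricted to principal cubes, decay at a fixed geometric rate, and for geometric sequences with ratio bounded away from $1$ the $\ell^{1/p_0}$ and $\ell^1$ sums are comparable (with constant depending only on $p_0$ and the sparseness constant), so $\sum_Q c_Q^{1/p_0}\chi_Q$ is pointwise comparable to the output of the \cite{LMS} estimate carried out with the $1/p_0$-powers in place. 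The averaging exponent $p_0$ and the Lebesgue exponent $p$ thus enter through different channels --- the former only through this geometric comparison, the latter through the Carleson embedding --- which is why the exponent that comes out is $\max\big(1,q_1'/p,\dots,q_m'/p\big)=\max\big(1,\tfrac{(p_1/p_0)'}{p},\dots,\tfrac{(p_m/p_0)'}{p}\big)$ of $[\vec w]_{A_{\vec P/p_0}}$. When $q<1$ (equivalently $p<1$) one first reduces to the case of exponents $\geq 1$ by Hölder's inequality among the $f_i$'s, exactly as in the multilinear Calder\'on--Zygmund theory; this reduction does not see the value of $p_0$.

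The main obstacle --- and the reason Theorem~\ref{theoremB} is not a formal consequence of \eqref{eq:LMS_sparse} --- is that $\mathcal A^{p_0}_{\mathcal S}$ \emph{dominates} $\big(\mathcal A^1_{\mathcal S}(\vec g)\big)^{1/p_0}$ with $g_i=|f_i|^{p_0}$ (by subadditivity of $t\mapsto t^{1/p_0}$), rather than being dominated by it, so the bound cannot be obtained by pointwise comparison and must come from inserting the weights before summing. Concretely, the work lies in verifying that the geometric comparison described above is compatible with the stopping-time decomposition and costs only a $p_0$-dependent constant, not an additional power of $[\vec w]_{A_{\vec P/p_0}}$, so that the sharp exponent of \cite{LMS} is preserved verbatim.
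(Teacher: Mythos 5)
Your first Hölder step is correct, and you also correctly identify the real difficulty: the exponent $1/p_0$ sits \emph{inside} the sum, so Theorem~\ref{theoremB} is not a pointwise consequence of the $p_0=1$ estimate. But the mechanism you propose to overcome this does not work. You claim that along branches of principal cubes the quantities $c_Q=\prod_i\langle F_i\rangle_Q^{1/q_i}\langle\sigma_i\rangle_Q^{1/q_i'}$ decay geometrically, so that the $\ell^{1/p_0}$- and $\ell^1$-sums of $\{c_Q\}_{Q\ni x}$ are comparable. That claim fails already in the simplest example: take $m=1$, $w_1\equiv1$, $f_1=\chi_{Q_0}$, so that $F_1\equiv\sigma_1\equiv1$ on $Q_0$ and $c_Q=1$ for every sparse $Q\subset Q_0$. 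Then $\sum_Q c_Q^{1/p_0}\chi_Q(x)=N(x)$, the number of sparse cubes containing $x$, while $\big(\sum_Q c_Q\chi_Q(x)\big)^{1/p_0}=N(x)^{1/p_0}$; the ratio $N(x)^{1-1/p_0}$ is unbounded as soon as $p_0>1$, so there is no pointwise comparison, geometric or otherwise. (The theorem still holds in this example, but only because $N$ has controlled $L^p$-norm rather than $L^\infty$-norm; your argument as sketched is pointwise.)

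The paper circumvents this entirely and never compares $\mathcal A^{p_0}_{\mathcal S}$ to a $p_0=1$ sparse operator. It substitutes $f_i\mapsto f_i\sigma_i^{1/p_0}$, so the relevant quantity becomes $\big(\sigma_i(Q)^{-1}\int_Q|f_i|^{p_0}\sigma_i\big)^{1/p_0}$ with no Hölder loss, then dualizes against $g\,\nu_{\vec w}$, extracts $[\vec w]_{A_{\vec P/p_0}}^\beta$ cube by cube, passes to the disjoint sets $E_Q$ via $|Q|\le2|E_Q|$ together with the elementary bound $|E_Q|\le\nu_{\vec w}(E_Q)^{1/(ma)}\prod_i\sigma_i(E_Q)^{1/(ma_i')}$, and only \emph{then} applies Hölder's inequality \emph{in the sum over $Q$} with exponents $p',p_1,\ldots,p_m$. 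The crucial point is that the $1/p_0$ on the $i$-th factor combines with the exponent $p_i$ of this discrete Hölder to raise the $\sigma_i$-average to the power $p_i/p_0$ while putting $\sigma_i(E_Q)$ to the power exactly $1$; disjointness of the $E_Q$ then turns each sum into a dyadic maximal function norm, and the universal $L^{p_i/p_0}(\sigma_i)$-bound for $M^{\mathscr D}_{\sigma_i}$ finishes. That Hölder-in-the-sum step is precisely where the exponent $1/p_0$ is absorbed, and it is the step missing from your sketch; you should replace the stopping-time/geometric-decay heuristic with this explicit duality computation.
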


{This is all we need to get the weighted bounds of our operators with nonsmooth kernels.}

\begin{ltheorem}
\label{theoremC}
Let $T$ satisfy (H1) and (H2). Suppose that $p_0<p_1,\ldots,p_m<\infty$ with $\tfrac{1}{p_1}+\ldots+\tfrac{1}{p_m}=\tfrac{1}{p}$ and $\vec{w}\in A_{\vec{P}/p_0}$.  Then
        \begin{equation*}
          \| T(\vec{f})\|_{L^p(\nu_{\vec{w}})} \lesssim  [\vec{w}]_{A_{\vec{P}/p_0}}^{\max\left(1,\tfrac{(p_1/p_0)'}{p},\dotsc,\tfrac{(p_m/p_0)'}{p}\right)}
 \prod_{i=1}^m \|f_i\|_{L^{p_i}(w_i)}.
        \end{equation*}
\end{ltheorem}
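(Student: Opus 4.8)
The plan is to deduce Theorem \ref{theoremC} from Theorems \ref{theoremA} and \ref{theoremB} via a pointwise sparse domination of $T(\vec f)$ by the operators $\mathcal{T}^{k,p_0}_{\mathcal S}$. The first step is to establish, for $T$ satisfying (H1) and (H2), a Lerner-type estimate of the form
\begin{equation*}
|T(\vec f)(x)| \lesssim \sum_{k\geq 0} 2^{-m\delta' k}\, \mathcal{T}^{k,p_0}_{\mathcal S_k}(\vec f)(x)
\end{equation*}
for a.e.\ $x$, where each $\mathcal S_k$ is a sparse family and $\delta'=\delta-n/p_0>0$ by (H2). The mechanism is the usual one: control the local mean oscillation of $T(\vec f)$ on a cube $Q$ by splitting each $f_i = f_i\chi_{2Q} + \sum_{j\geq 1} f_i\chi_{S_j(Q)}$; the fully local piece is handled by the weak $(p_0,\dots,p_0;p_0/m)$ bound (H1), which produces the averages $\langle f_i\rangle_{2Q,p_0}$, while each tail term in which at least one $f_i$ is supported on an annulus $S_j(Q)$ is estimated by the regularity condition (H2), producing a factor $2^{-m\delta' j}$ times a product of averages $\langle f_i\rangle_{2^jQ,p_0}$. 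Summing over $j$ and feeding the resulting median/oscillation bound into Lerner's formula yields the claimed domination, with the $k$-th slice carrying the gain $2^{-m\delta' k}$.

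The second step is purely a matter of assembling the pieces. Fix a quasi-Banach function space $\mathbb X$; here we take $\mathbb X = L^p(\nu_{\vec w})$, which is a quasi-Banach function space in the sense of \cite{CR} for $p>p_0/m$ — note $p>p_0$ since $p_i>p_0$. Applying the triangle (or quasi-triangle) inequality in $\mathbb X$ to the sum over $k$, then Theorem \ref{theoremA} to each term $\mathcal{T}^{k,p_0}_{\mathcal S_k}$, and finally Theorem \ref{theoremB} to the resulting operators $\mathcal{A}^{0,p_0}_{\mathcal S_k'}$, we obtain
\begin{equation*}
\|T(\vec f)\|_{L^p(\nu_{\vec w})} \lesssim \Big(\sum_{k\geq 0} 2^{-m\delta' k}(k+1)\Big)\, [\vec w]_{A_{\vec P/p_0}}^{\max(1,(p_1/p_0)'/p,\dots,(p_m/p_0)'/p)} \prod_{i=1}^m \|f_i\|_{L^{p_i}(w_i)},
\end{equation*}
and the series $\sum_k 2^{-m\delta' k}(k+1)$ converges because $\delta'>0$. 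This gives exactly the asserted bound, with the implicit constant absorbing the sum and the dimensional/structural constants. (If $\mathbb X$ is only quasi-Banach with modulus of concavity $\kappa$, one either works with the equivalent norm from the Aoki–Rolewicz theorem, or — since here $p$ may be taken $\geq 1$ in the cases that matter, or the series converges geometrically fast — sums the $k$-slices directly; this is a routine adjustment.)

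The main obstacle is the first step, specifically verifying the tail estimate under the \emph{weak-type only} hypothesis (H1) together with the \emph{$L^{p_0'}$-Hörmander-type} condition (H2), rather than the pointwise Hölder kernel bounds available in the Calderón–Zygmund setting. One has to be careful that the exponent $p_0$ in the averages matches throughout: the local term genuinely requires a $p_0$-average because only a weak $L^{p_0/m}$ bound is assumed (here Kolmogorov's inequality converts the weak bound into a usable $L^{p_0/m}$-type average over a subcube), and the tail term's use of Hölder's inequality with exponents $p_0'$ and $p_0$ against (H2) also forces $p_0$-averages — so the two contributions are compatible and both land inside $\mathcal{T}^{k,p_0}_{\mathcal S}$. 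A secondary technical point is the passage from a local mean-oscillation/median bound to an honest sparse operator: this is the standard Lerner machinery (decompose via a stopping-time family, use that medians control values outside an exceptional set of controlled measure), but one must track that the sparse families produced for different $k$ can be taken uniformly sparse so that Theorem \ref{theoremA} applies with a $k$-independent sparseness constant, leaving only the harmless $(k+1)$ factor from that theorem.
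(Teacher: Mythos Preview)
Your proposal is correct and follows essentially the same route as the paper: establish a local mean oscillation estimate for $T(\vec f)$ on a cube (this is the paper's Proposition~\ref{prop4.2}, proved exactly as you sketch via (H1) on the local piece and H\"older against (H2) on the tails), feed it into Lerner's formula (Theorem~\ref{decom1}) to produce a pointwise bound by $\sum_{\ell\ge 0}2^{-\ell\delta_0}\mathcal{T}^{\ell,p_0}_{\mathcal S}(\vec f)$, apply Theorem~\ref{theoremA} termwise, and finish with Theorem~\ref{theoremB}. One small slip: your parenthetical ``note $p>p_0$ since $p_i>p_0$'' is false for $m\ge 2$ (take $p_1=\dots=p_m=2p_0$, giving $p=2p_0/m$); the correct inequality is $p>p_0/m$, which is all you need since $L^p(\nu_{\vec w})$ is a quasi-Banach function space for every $p>0$.
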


 We would like to point out that our results are new even for the linear case. Although our conjecture is that these bounds are sharp, we {could not} prove this and leave it as an open problem.

\bigskip

\par The outline of the rest of this paper is the following: In the next section we recall the definition of multiple weights and  Lerner's local oscillation formula. Section \ref{secpointwise} is devoted to prove Theorem \ref{theoremA}. The proof of Theorem \ref{theoremB} and Theorem \ref{theoremC} are given in Section \ref{secapp}. Finally, in Section \ref{App-sec}, we apply the obtained result in Theorem \ref{theoremC} to consider the weighted bounds of certain singular integral operators such as linear and multilinear Fourier multipliers and Riesz transforms associated to Schr\"odinger operators.

\par Throughout the text, $A\lesssim B$ will denote $A\leq C B$, where $C$ will denote a positive constant independent of the weight which may change from {line to line}. Moreover, $A\lesssim_{{a,b}} B$ will denote $A\leq C B$, where $C$ will denote a positive constant dependent on $a$ and $b$.

\section{Preliminaries}
\label{Sect:Preliminaries}

\subsection{Multiple weight theory}
For a general account on multiple weights and related results we refer the interested reader to \cite{LOPTT}. In this section we briefly introduce some definitions and results {that} we will need. Consider $m$ weights $w_1,\dotsc,w_m$ and denote $\overrightarrow{w}=(w_1,\dotsc,w_m)$.  Also let $1<p_1,\dotsc,p_m<\infty$ and ${0<p<\infty}$ be numbers such that $\frac{1}{p}=\frac{1}{p_1}+\dotsb+\frac{1}{p_m}$, and denote $\overrightarrow P = (p_1,\dotsc, p_m)$. Set

  $$\nu_{\vec w}:=\prod_{i=1}^m w_i^{\frac{p}{p_i}}.$$
We say that $\vec{w}$ satisfies the $A_{\vec{P}}$ condition if
\begin{equation}\label{eq:multiap_LOPTT}
  [\vec{w}]_{A_{\vec{P}}}:=\sup_{Q}\Big(\frac{1}{|Q|}\int_Q\nu_{\vec w}\Big)\prod_{i=1}^m\Big(\frac{1}{|Q|}\int_Q w_i^{1-p'_i}\Big)^{p/p'_i}<\infty.
\end{equation}
When $p_i=1$ {for some $i$}, $\Big(\frac{1}{|Q|}\int_Q w_i^{1-p'_i}\Big)^{p/p'_i}$ is understood as $\displaystyle(\inf_Qw_i)^{-p}$. This condition, introduced in \cite{LOPTT}, was shown to characterize the classes of weights for which the multilinear maximal function $\Mm$ is bounded from $L^{p_1}(w_1)\times\dotsb\times L^{p_m}(w_m)$ into $L^p(\nu_{\vec{w}})$ (see \cite[Thm. 3.7]{LOPTT}). We also denote by $A_p, 1\leq p<\vc$ and $RH_q, 1<q\leq \vc$ the classes of Muckenhoupt weights and the classes of reverse H\"older weights on $\mathbb{R}^n$, respectively. For $w\in A_p, 1\leq p <\vc$, the quantity $[w]_{A_p}$ is defined as \eqref{eq:multiap_LOPTT} by
\begin{equation*}
  [w]_{A_p} := \sup_{Q} \left(\frac{1}{|Q|}\int_Q w(x) dx\right) \left(\frac{1}{|Q|}\int_Q w(x)^{-\frac{1}{p-1}}dx\right)^{p-1},
\end{equation*}
with the usual modification {when} $p=1$. The supremum above is taken over all cubes (or balls) in $\Rn$. For $w\in RH_{q}, 1<q\leq \vc$, we define
\begin{equation*}
  [w]_{RH_q} := \sup_{Q} \left(\frac{1}{|Q|}\int_Q w(x)^q dx\right)^{1/q} \left(\frac{1}{|Q|}\int_Q w(x)dx\right)^{-1},
\end{equation*}
with the usual modification {when} $q=\vc$. Again, the supremum is taken over all cubes (or balls) in $\Rn$.

Let $\sigma\in A_\infty=\cup_{p\geq 1}A_p$. The dyadic maximal function with respect to $\sigma$ is defined as
\begin{equation}\label{bbs}
M_{\sigma}^{\mathscr{D}}(f)(x)= \sup_{  \begin{subarray}{c} x \in Q \\ Q\in  \mathscr{D} \end{subarray} } \frac{1}{\sigma(Q)} \int_{Q} |f|\sigma.
\end{equation}
It is well-known that
\begin{equation}\label{bbs}
\| M_{\sigma}^{\mathscr{D}}f\|_{L^p(\sigma) } \leq p' \| f\|_{L^p(\sigma) },~~~~~~~~~~1<p<\infty.
\end{equation}
See e.g \cite{Moen}.

\subsection{A local mean oscillation formula}

 For the notion of  {\it general dyadic grid} ${\mathscr{D}}$ we refer to previous {works} (e.g. \cite{Ler3} and \cite{Hyt2}). A collection $\mathcal{S}=\{Q\} \subset \mathscr{D}$ is called a {\it sparse family} of cubes if there exist pairwise disjoint subsets $E_Q \subset Q$ with $|Q|\le 2|E_Q|$ for each $Q \in \mathcal{S}$. The major tool to prove our main results is Lerner's local oscillation formula from \cite{Ler3}. To formulate it we need to introduce {several} notions. By a median value of measurable function $f$ on a set $Q$ we mean a possibly nonunique, real number $m_f(Q)$ such that
$$\max\big(|\{x\in Q: f(x)>m_f(Q)\}|,|\{x\in Q: f(x)<m_f(Q)\}|\big)\le |Q|/2.$$

The decreasing rearrangement of a measurable function $f$ on ${\mathbb R}^n$ is defined by
$$f^*(t)=\inf\{\a>0:|\{x\in {\mathbb R}^n:|f(x)|>\a\}|<t\},\quad0<t<\infty.$$
The local mean oscillation of $f$ is
$$\o_{\la}(f;Q)=\inf_{c\in {\mathbb R}}
\big((f-c)\chi_{Q}\big)^*\big(\la|Q|\big),\quad0<\la<1.
$$
Then it follows from the definitions that
\begin{equation}\label{pro1}
|m_f(Q)|\le (f\chi_Q)^*(|Q|/2).
\end{equation}
The following theorem was proved in by Hyt\"onen \cite[Theorem 2.3]{Hyt2} in order to improve original Lerner's formula given in \cite{L1,Ler3}.
\begin{theorem}\label{decom1}
Let $f$ be a measurable function on ${\mathbb R}^n$ and let $Q_0$ be a fixed cube. Then there exists a
(possibly empty) sparse family $\mathcal{S}$ of cubes $Q\in \mathscr{D}(Q_0)$ such that for a.e. $x\in Q_0$,
\begin{equation}\label{eq:LMOD_Lerner}
  |f(x)-m_f(Q_0)|\le 2\sum_{Q \in \mathcal{S}} \o_{\frac{1}{2^{n+2}}}(f;Q)\chi_{Q}(x).
\end{equation}
\end{theorem}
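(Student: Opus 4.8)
The plan is to produce $\mathcal{S}$ by a recursive Calder\'on--Zygmund stopping-time construction inside $Q_0$, following the now-standard route (cf.\ \cite{Ler3,Hyt2}), with the stopping condition measured against the median. Fix $\la=\tfrac1{2^{n+2}}$ and, for a cube $Q$, write $a_Q:=2\,\o_\la(f;Q)$ and $E_Q:=\{x\in Q:\ |f(x)-m_f(Q)|>a_Q\}$. First I would record two facts about medians. Applying \eqref{pro1} to $f-c$ on $Q$ gives $|m_f(Q)-c|\le\big((f-c)\chi_Q\big)^*(|Q|/2)$ for every constant $c$; taking $c$ (nearly) optimal in the definition of $\o_\la(f;Q)$ and using that $\la<\tfrac12$ forces $|m_f(Q)-c|\le\o_\la(f;Q)$, whence, by sub-additivity of the decreasing rearrangement under a constant shift, $\big((f-m_f(Q))\chi_Q\big)^*(\la|Q|)\le 2\,\o_\la(f;Q)=a_Q$; in particular $|E_Q|\le\la|Q|$. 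This is where the factor $2$ in \eqref{eq:LMOD_Lerner} comes from, since $\o_\la$ is computed from the optimal constant while the telescoping below runs through medians.

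For the one-step selection at a fixed cube $Q$, let $\{P\}$ be the maximal cubes of $\mathscr{D}(Q)$ properly contained in $Q$ for which $|P\cap E_Q|>\tfrac1{2^{n+1}}|P|$. These are pairwise disjoint and, by the Lebesgue differentiation theorem, cover $E_Q$ up to a null set. Maximality forces $|P\cap E_Q|\le\tfrac12|P|$ for each selected $P$ (its dyadic parent fails the stopping inequality and is $2^n$ times larger), which has two consequences. First,
\[
\sum_P|P|\ \le\ 2^{n+1}\sum_P|P\cap E_Q|\ \le\ 2^{n+1}|E_Q|\ \le\ 2^{n+1}\la|Q|\ =\ \tfrac12|Q|.
\]
Second, since $|f-m_f(Q)|\le a_Q$ on $P\setminus E_Q$ while $|P\setminus E_Q|\ge\tfrac12|P|$, one more use of \eqref{pro1} (applied to $f-m_f(Q)$ on $P$) gives $|m_f(P)-m_f(Q)|\le\big((f-m_f(Q))\chi_P\big)^*(|P|/2)\le a_Q$.

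Then I would iterate: set $\mathcal{S}_0=\{Q_0\}$, let $\mathcal{S}_{k+1}$ be the union of the families of selected cubes attached to the members of $\mathcal{S}_k$, and $\mathcal{S}=\bigcup_k\mathcal{S}_k\subset\mathscr{D}(Q_0)$. Sparseness follows from the first display: the sets $E^{\mathcal S}_Q:=Q\setminus\bigcup\{P:\ P\text{ selected from }Q\}$ satisfy $|E^{\mathcal S}_Q|\ge\tfrac12|Q|$ and are pairwise disjoint, since any proper $\mathcal{S}$-descendant of $Q$ lies inside one of the cubes $P$. For the pointwise estimate, fix $x\in Q_0$ outside the countably many null sets above; the cubes of $\mathcal{S}$ containing $x$ form a chain $Q_0=R_0\supsetneq R_1\supsetneq\cdots$ for inclusion. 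If the chain is finite with last term $R_N$, then $x$ lies in no cube selected from $R_N$, so $x\notin E_{R_N}$, i.e.\ $|f(x)-m_f(R_N)|\le a_{R_N}$; telescoping the medians,
\[
|f(x)-m_f(Q_0)|\ \le\ |f(x)-m_f(R_N)|+\sum_{i=0}^{N-1}|m_f(R_{i+1})-m_f(R_i)|\ \le\ \sum_{i=0}^{N}a_{R_i}\ =\ 2\sum_{Q\in\mathcal S}\o_\la(f;Q)\chi_Q(x).
\]
If the chain is infinite, it shrinks to $\{x\}$, $m_f(R_i)\to f(x)$ for a.e.\ $x$ by the differentiation theorem for medians, and the same telescoping in the limit gives the same bound. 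This is exactly \eqref{eq:LMOD_Lerner}.

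The step I expect to be the real obstacle is quantitative bookkeeping: choosing a single parameter $\la$ so that one selection simultaneously makes the chosen cubes sparse (which wants $2^{n+1}\la\le\tfrac12$) and keeps $E_Q$ small enough that the stopping cubes obey $|P\cap E_Q|\le\tfrac12|P|$, all while the threshold $a_Q$ stays controlled by $\o_\la(f;Q)$ (which wants $\la<\tfrac12$); the value $\la=2^{-n-2}$ together with the median-versus-optimal-constant factor $2$ is what reconciles these. A related delicate point, to be treated with care, is the median comparison along the chain in the borderline case $|P\cap E_Q|=\tfrac12|P|$, where the non-uniqueness of the median and the boundary behaviour of the decreasing rearrangement enter; this can be handled by perturbing the stopping level slightly or by checking that an admissible choice of medians realizes the required inequalities.
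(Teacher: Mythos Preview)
Your argument is correct and is essentially the standard Lerner--Hyt\"onen stopping-time proof. Note, however, that the paper does not supply its own proof of this statement: Theorem~\ref{decom1} is quoted from \cite[Theorem~2.3]{Hyt2} as a preliminary tool, with no argument given in the text. What you have written is a faithful reconstruction of that cited proof, so there is nothing in the paper to compare against beyond the reference itself.
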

%

\section{Proof of Theorem \ref{theoremA}}\label{secpointwise}

This section is entirely devoted to the proof of theorem \ref{theoremA}. To that end, some reductions are in order. First, since the operator $\mathcal{T}_{\mathcal{S}}^{k,p_0}$ is (multi)-sublinear, we may assume that $f_i \geq 0$ for $1\leq i \leq m$. Second, by a well known variation of the one-third trick (see, for example, \cite{HLP}), we may exchange centered dilations by dyadic ancestors. More precisely, we may write
$$
\mathcal{T}_{\mathcal{S}}^{k,p_0}\vec{f} (x) \lesssim_{p_0,n} \sum_{j=1}^{c_n} \mathcal{A}^{k,p_0}_{\mathcal{S}_j} \vec{f} (x).
$$
for certain dyadic systems $\mathscr{D}^1, \ldots, \mathscr{D}^{c_n}$, sparse families $\mathcal{S}_j \subset \mathscr{D}^j$ and some dimensional constant $c_n$. Therefore, we may just concentrate on one such operator $\mathcal{A}_{\mathcal{S}}^{k,p_0}$. However, we will consider a slightly more general operator. Namely, given a dyadic system $\mathscr{D}$, we will study operators of the form
$$
\mathcal{A}_{\alpha}^{k,p_0} \vec{f}(x) = \sum_{Q \in \mathscr{D}} \alpha_Q \left[ \prod_{i=1}^m  \langle f_i \rangle_{Q^{(k)},p_0} \right] \chi_Q(x),
$$
where the sequence $\alpha=(\alpha_Q)_Q$ is \textit{Carleson} and normalized:
$$
\sup_{Q\in \mathscr{D}} \sum_{T \in \mathscr{D}(Q)}  \alpha_T |T| =1.
$$
Finally, by the usual density arguments, we may assume that the sequence $\alpha$ is finite, which in particular implies that there exists some cube $P_0 \in \mathscr{D}$ such that $\mathcal{A}_{\alpha}^{k,p_0} = \mathcal{A}_{\alpha,P_0}^{k,p_0}$, that is,
$$
\mathcal{A}_{\alpha}^{k,p_0} \vec{f}(x) = \sum_{Q \in \mathscr{D}, Q^{(k)} \subset P_0} \alpha_Q \left[ \prod_{i=1}^m  \langle f_i \rangle_{Q^{(k)},p_0} \right] \chi_Q(x).
$$
{At the same time, we will assume that each function $f_i$ is supported in the cube $P_0$. The positivity of the operators involved and the density of (say) $L^1_c$ in the quasi-Banach space $\mathbb{X}$ will allow to pass to the limit, as in \cite[chap. 2.3]{CondeThesis}.} The rest of the proof consists of a pointwise estimate of $\mathcal{A}_{\alpha,P_0}^{k,p_0}$ and follows the lines of \cite{CR}. Since it is a bit lengthy, we have chosen to divide it into several steps. We will skip some details in the points where our argument does not differ substantially from that of \cite{CR}.
\\
\vskip 2pt

\noindent \textbf{Step 1. Slicing: reduction to separated scales.} We start the proof separating the scales of $\mathcal{A}^{k,p_0}_{\alpha,P_0}$ as follows:
\begin{align*}
	\mathcal{A}^{k,p_0}_{\alpha,P_0}\vec{f}(x) &=
		\sum_{\ell=0}^{k-1} \sum_{j=1}^\infty \sum_{Q \in \mathscr{D}_{jk+\ell}(P_0)} \alpha_Q \Bigl( \prod_{i=1}^m \langle f_i \rangle_{Q^{(k)},p_0} \Bigr) \chi_Q(x) \\
	&=: \sum_{\ell=0}^{k-1} \mathcal{A}_{\alpha,P_0}^{k,p_0;\ell} \vec{f}(x).
\end{align*}
Now, as in \cite{CR}, we rewrite $\mathcal{A}_{\alpha,P_0}^{k,p_0;\ell}$ as a sum of disjointly supported operators of the form $\mathcal{A}_{\alpha,P}^{k,p_0;0}$. Indeed, we have the expression
\begin{align*}
	\mathcal{A}^{k,p_0}_{\alpha,P_0}\vec{f}(x) &= \sum_{\ell=0}^{k-1} \sum_{P \in \mathscr{D}_\ell(P_0)} \mathcal{A}^{k,p_0;0}_{\alpha,P} \vec{f}(x).
\end{align*}
Therefore, it is enough to prove the following claim: Let $k\geq1$ and $\alpha$ be a normalized Carleson sequence. For nonnegative integrable functions $f_1,\dots, f_m $ on $P_0$, there exists a sparse family $\mathcal{S}$ of cubes in $\mathscr{D}(P_0)$ such that
$$
 \mathcal{A}_{\alpha,P_0}^{k,p_0;0}\vec{f}(x) \leq C \sum_{Q\in \mathcal{S}}\left( \prod_{i=1}^{m}\langle f_i \rangle_{Q,p_0} \right)\chi_{Q}(x),
$$
for some constant $C$ independent of $k$ and the cube $P_0$.
\\
\vskip 2pt

\noindent \textbf{Step 2. Construction of the collection $\mathcal{S}$ for the sliced operator.} We now build the family $\mathcal{S}$. The construction is similar to that in \cite[P. 6]{CR}. We start by defining the quantity
$$
C^* := 2^{2(m+1)} \mathrm{W}(p_0,k),
$$
where
$$
\mathrm{W}{(p_0,k)} = \sup_{\begin{subarray}{c}P \in \mathscr{D}, \alpha \; \mathrm{Carleson} \\ \alpha_Q \not=0 \Rightarrow Q \in \mathscr{D}(P) \end{subarray}} \left\| \mathcal{A}_{\alpha}^{k,p_0;0}\right\|_{L^{p_0}\times L^{p_0} \ldots \times L^{p_0} \to L^{p_0/m,\infty}}.
$$
Also, if $Q\in \mathscr{D}_{kn}(P_0)$ for some $n \geq 0$ define
$$
\gamma_Q = \max_{R \in \mathscr{D}_k(Q)} \alpha_R.
$$
Set also $\Delta_{P_0}=0$. Then, we inductively implement the following selection procedure, starting with the cube $P=P_0$:
\begin{enumerate}
\item If $\Delta_P-\left( \prod_{i=1}^{m}\langle f_i\rangle_{P,p_0} \right)\gamma_{P} <0$, then we choose $P\in \mathcal{S}$ and we set
$$
\Delta_Q = \Delta_P + (C^*-\alpha_Q)\left( \prod_{i=1}^{m}\langle f_i\rangle_{P,p_0} \right).
$$
for all $Q \in \mathscr{D}_k(P)$.
\item If $\Delta_P-\left( \prod_{i=1}^{m}\langle f_i\rangle_{P,p_0} \right)\gamma_{P} \geq0$, then we choose $P\not\in \mathcal{S}$ and we set
$$
\Delta_Q = \Delta_P -\alpha_Q\left( \prod_{i=1}^{m}\langle f_i\rangle_{P,p_0} \right).
$$
\item Go back to (1) for the cubes $Q\in \mathscr{D}_{k}(P)$.
\end{enumerate}
Since the sequence $\alpha$ is finite the procedure ends and yields the family $\mathcal{S}$ that we will use.
\\
\vskip 2pt

\noindent \textbf{Step 3. The family $\mathcal{S}$ is sparse.} To prove sparsity, we will show the following (stronger) claim: fix $P \in \mathcal{S}$, and denote
$$
F(P):=\bigcup_{Q\subsetneq P, Q\in \mathcal{S} } Q.
$$
Then, $|F(P)|\leq \frac{1}{2} |P|$. The claim and its proof are entirely similar to \cite[P.7-8]{CR}. Let $\mathcal{R}$ be the collection of maximal subcubes of $P$ which belong to $\mathcal{S}$. By maximality, for each $x \in R \in \mathcal{R}$ we have
$$
\left(\prod_{i=1}^{m}\langle f_i\rangle_{R,p_0}\right)\gamma_{R} + \mathcal{A}_{\alpha,P}^{k,p_0;0}\vec{f}(x)> C^* \left(\prod_{i=1}^{m}\langle f_i\rangle_{P,p_0}\right).
$$
Now, denote $\mathcal{G}_{P,p_0}\vec{f} = \sum_{R\in \mathcal{R}} \gamma_{R} \left(\prod_{i=1}^{m} \langle f_i\rangle_{R,p_0}\right) \chi_{R}$. Then for all $x\in \mathcal{R}$,
$$
\mathcal{G}_{P,p_0}\vec{f}(x)+\mathcal{A}_{\alpha,P}^{k,p_0;0}\vec{f}(x)> C^* \left(\prod_{i=1}^{m} \langle f_i \rangle_{P,p_0}\right).
$$
Thus we have
\begin{eqnarray*}
|F(P)| & \leq & \left| \left\{ x \in P:  \mathcal{G}_{P,p_0}\vec{f}(x)+\mathcal{A}_{\alpha,P}^{k,p_0;0}\vec{f}(x)> C^* \left(\prod_{i=1}^{m} \langle f_i \rangle_{P,p_0}\right) \right\}  \right| \\
& \leq & \frac{\|\mathcal{G}_{P,p_0}+\mathcal{A}_{\alpha,P}^{k,p_0;0} \|_{L^{p_0}(P)\times \dots \times L^{p_0}(P) \to L^{p_0/m,\infty}(P)}^{p_{0}/m} }{\left( C^* \left(\prod_{i=1}^{m} \langle f_i\rangle_{P,p_0}\right) \right)^{p_{0}/m}}  \left(\prod_{i=1}^{m} \|f_i\|_{L^{p_0}(P)} \right)^{p_{0}/m}\\
& \leq & 2^{mp_0+1} |P|\left(\frac{\|\mathcal{G}_{P,p_0}\|_{L^{p_0}(P)\times \dots \times L^{p_0}(P) \to L^{p_0/m,\infty}(P)}^{p_{0}/m} }{ \left(C^* \right)^{p_{0}/m} } + \frac{\|\mathcal{A}_{\alpha,P}^{k,p_0;0} \|_{L^{p_0}\times \dots \times L^{p_0} \to L^{p_0/m,\infty}(P)}^{p_{0}/m} }{ \left(C^* \right)^{p_{0}/m} }\right)\\
& \leq & \frac{|P|}{2} \left(\frac{\|\mathcal{G}_{P,p_0}\|_{L^{p_0}(P)\times \dots \times L^{p_0}(P) \to L^{p_0/m,\infty}(P)}^{p_{0}/m} }{2} + \frac{1}{2}\right).\\
\end{eqnarray*}
Finally, we observe that the operator $\mathcal{G}_{P,p_0}$ is bounded above by the multi-sublinear operator
$$
\mathcal{P}_{P,p_0} \vec{f} = \sum_{R \in \mathcal{R}} \Bigl(\prod_{i=1}^m \langle f_i \rangle_{R,p_0}\Bigr) \chi_R,
$$
which is contractive from $L^{p_0}(P) \times \ldots \times L^{p_0}(P)$ to $L^{p_0/m,\infty}(P)$. Therefore, the norm of $\mathcal{G}_{P,p_0}$ from $L^{p_0}(P) \times \ldots \times L^{p_0}(P)$ to $L^{p_0/m,\infty}(P)$ is bounded by $1$. This is enough to obtain the assertion.
\\
\vskip 2pt

\noindent \textbf{Step 4. Pointwise bound.} Following the proof of \cite[lemma 2.3]{CR}, one gets the pointwise bound
$$
\mathcal{A}_{\alpha,P_0}^{k,p_0;0}\vec{f}(x) \lesssim_{n,m} \mathrm{W}(p_0,k) \sum_{Q\in \mathcal{S}} \left( \prod_{i=1}^{m} \langle f_i\rangle_{Q,p_0} \right)\chi_{Q}(x).
$$
Therefore, we only need to prove the bound $\mathrm{W}(p_0,k) \lesssim_{p_0,n,m} 1$ and the proof will be complete.
\\
\vskip 2pt

\noindent \textbf{Step 5. Weak type estimate for $\mathcal{A}_{\alpha,P}^{k,p_0}$.} Fix some $P \in \mathscr{D}$ and some normalized Carleson sequence $\alpha$ such that $\alpha_Q \not=0$ only if $Q \in \mathscr{D}(P)$. We need to show that
$$
\|\mathcal{A}_{\alpha,P}^{k,p_0} \|_{L^{p_0} \times \ldots \times L^{p_0} \to L^{p_0/m, \infty}} \lesssim_{n,m,p_0} 1.
$$
To prove it, we first establish an $L^{2p_0m}$ estimate. We will use the estimate \cite{Chen2013}, which reads as follows:
\begin{equation}\label{Complexity.MultilinearCET}
    \Bigl( \sum_{Q \in \mathscr{D}(P)} \alpha_Q \Bigl( \prod_{i=1}^m \frac{1}{|Q|}\int_Q  f_i  \Bigr)^q |Q|  \Bigr)^{\frac1q} \leq \prod_{i=1}^m p_i' \|f_i\|_{L^{p_i}(P)}
\end{equation}
whenever
\begin{equation*}
    \frac{1}{q} = \frac{1}{p_1} + \dots + \frac{1}{p_m}
\end{equation*}
and $\alpha$ is Carleson and normalized. What we will show is
\[
\|\mathcal{A}_{\alpha,P}^{k,p_0} \vec{f}\|_{L^{2p_0}} \lesssim \prod_{i=1}^m \|f_i\|_{L^{2p_0m}}.
\]
Indeed, we begin by using duality to reduce to showing
	\[
		\int_{P} g(x) \mathcal{A}_{\alpha,P}^{k,p_0} \vec{f}(x) \, dx \lesssim 1
	\]
assuming that $\|f_i\|_{L^{2p_0m}} = \|g\|_{L^{(2p)'}} = 1$ for all $1\leq i \leq m$ and $g\geq 0$. By definition and H\"older's inequality, it is enough to show
	\[
		\Bigl( \sum_{Q \in \mathscr{D}_{\geq m}(P_0)} \alpha_Q \Bigl(\prod_{i=1}^m \langle f_i \rangle_{Q^{(k)},p_0} \Bigr)^{2p_0} |Q| \Bigr)^{1/{2p_0}}
			\Bigl( \sum_{Q \in \mathscr{D}_{\geq k}(P_0)} \alpha_Q \left(\frac{1}{|Q|}\int_Q g \right)^{(2p_0)'} |Q| \Bigr)^{1/(2p_0)'}.
	\]
	The second term can be estimated, using \eqref{Complexity.MultilinearCET} in the linear case, by an absolute constant. For the first term observe that the sequence $\beta_Q$ defined by
	\[
		\beta_Q = \frac{1}{2^{nk}} \sum_{R \in \mathscr{D}_k(Q)} \alpha_R
	\]
	is a Carleson sequence adapted to $P$ of constant 1. Indeed, for any $Q \in \mathscr{D}(P)$, there holds:
	\begin{align*}
		\frac{1}{|Q|} \sum_{R \in \mathscr{D}(Q)} \beta_R|R| &= \frac{1}{|Q|}\sum_{R \in \mathscr{D}(Q)} |R| \frac{1}{2^{nk}} \sum_{T \in \mathscr{D}_k(R)} \alpha_T \\
		&= \frac{1}{|Q|}\sum_{R \in \mathscr{D}(Q)}\sum_{T \in \mathscr{D}_k(R)} \alpha_T |T| \\
		&= \frac{1}{|Q|} \sum_{R \in \mathscr{D}_{\geq k}(Q)} \alpha_R|R| \\
		&\leq 1.
	\end{align*}
Therefore, we can write the first term as
	\[
		\Bigl( \sum_{Q \in \mathscr{D}(P)} \beta_Q \Bigl( \sum_{i=1}^m \langle f_i \rangle_{Q,p_0}  \Bigr)^{2p_0} |Q| \Bigr)^{1/{2p_0}},
	\]
which can also be estimated by \eqref{Complexity.MultilinearCET}, with $p_1=p_2=\ldots = p_m=2m$, $q=2$:	
\begin{eqnarray*}
\Bigl( \sum_{Q \in \mathscr{D}(P)} \beta_Q \Bigl( \prod_{i=1}^m \langle f_i \rangle_{Q,p_0}  \Bigr)^{2p_0} \Bigr)^{1/{2p_0}} & = & \Bigl( \sum_{Q \in \mathscr{D}(P)} \beta_Q \Bigl( \prod_{i=1}^m \langle |f_i|^{p_0} \rangle_{Q,1}  \Bigr)^{2} |Q| \Bigr)^{1/{2p_0}} \\
& \lesssim_{p_0,m} & \prod_{i=1}^m \||f_i|^{p_0}\|_{2m}^{1/p_0} \lesssim 1.\\
\end{eqnarray*}
Combining both terms we arrive at the strong type result that we want.
\vskip 2pt
Now we can prove our weak type estimate. That is, we want to show that
\begin{equation*}
	\sup_{\lambda > 0} \lambda |\{x: \mathcal{A}_{\alpha,P}^{k,p_0} \vec{f}(x) > \lambda^{m/p_0} \}| ^{m/p_0} \lesssim_{n,m,p_0} \prod_{i=1}^m \|f_i\|_{L^{p_0}}.
\end{equation*}
By homogeneity we can assume $\|f_i\|_{L^{p_0}} = 1$ and $f_i\geq 0$ for $1 \leq i \leq m$. We will use the previous strong bound and a standard Calder\'on-Zygmund decomposition of the positive tuple $(f_1^{p_0}, \ldots, f_m^{p_0})$ that we explain now.

We need the following version of the dyadic maximal operator
$$
\mathcal{M}^\mathscr{D}_{p_0} g(x) = \sup_{x \in Q \in \mathscr{D}}  \langle g \rangle_{Q,p_0}.
$$
For $1\leq i \leq m$, denote
$$
\Omega_i = \{x\in P: \mathcal{M}^\mathscr{D}_{p_0} f_i(x) > \lambda^{1/m}\}.
$$
If $\langle f_i \rangle_{P,p_0} > \lambda^{1/m}$ then we have
	\[
		|P| \lambda^{p_0/m} < \|f_i\|_{L^p_0}^{p_0},
	\]
and the estimate follows by the homogeneity assumption. Therefore, we can assume $\langle f_i \rangle_{P,p_0} \leq \lambda^{1/m}$ for all $1\leq i\leq m$. But then, we can write $\Omega_i$ as a union of cubes in a collection $\mathcal{R}_i$ consisting of pairwise disjoint dyadic (strict) subcubes $R$ of $P$ with the property
	\[
		\langle f_i \rangle_{R,p_0} > \lambda^{1/m} \quad \text{and} \quad \langle f_i \rangle_{R^{(1)},p_0} \leq \lambda^{1/m}, \; R \in \mathcal{R}_i.
	\]
For each $1\leq i \leq m$ let $b_i = \sum_{R \in \mathcal{R}_i} b_i^R$, where	
\[
		b_i^R(x) := \bigl( f_i^{p_0}(x) - \langle f_i \rangle_{R,p_0}^{p_0} \bigr) \chi_R(x).
	\]
	We now let $g_i = f_i^{p_0}-b_i$. Observe that we have
	\[
		|g_i(x)| \lesssim \lambda^{p_0/m}, \; \|g_i\|_{L^1} \lesssim \|f_i\|_{L^{p_0}}=1
	\]
	as well as
	\[
		|\Omega_i| = \sum_{R \in \mathcal{R}_i} |R| \leq \frac{1}{ \lambda^{p_0/m}}.
	\]
	
Set $\Omega = \cup_i \Omega_i$. Now we have
	\begin{align}
		|\{x: \, \mathcal{A}_{\alpha,P}^{k,p_0} \vec{f}(x) > \lambda\}| &\leq |\Omega| +
			|\{x\in \mathbb{R}^n \setminus \Omega: \, \mathcal{A}_{\alpha,P}^{k,p_0} \vec{f}(x) > \lambda\}| \notag \\
		&\leq  \frac{m}{ \lambda^{p_0/m}} + |\{x\in \mathbb{R}^n\setminus \Omega: \, \mathcal{A}_{\alpha,P}^{k,p_0} \vec{f}(x) > \lambda\}|. \label{Complexity.LebesgueWeakType.eq1}
	\end{align}
To estimate the second term above observe that we have
$$
\langle f_i \rangle_{Q,p_0}^{p_0} \leq \left| \left(\frac{1}{|Q|} \int_Q g_i \right) \right|+ \left| \left(\frac{1}{|Q|} \int_Q b_i \right)\right|.
$$
Therefore, by the concavity of the function $x \mapsto |x|^{\frac{1}{p_0}}$ we obtain
$$
\mathcal{A}_{\alpha,P}^{k,p_0} \vec{f}(x) \leq  |\mathcal{A}|_{\alpha,P}^{k,p_0} \vec{g}(x) + \sum_{j=1}^{2^m-1} |\mathcal{A}|_{\alpha,P}^{k,p_0} (h_1^j, \ldots, h_m^j)(x),
$$
where we have denoted $\vec{g} = (g_i)_{1\leq i \leq k}$, $h_i^j$ is either $g_i$ or $b_i$ and for each $j$, there is at least one $1\leq i\leq m$ such that $h_i^j=b_i$. Also, we have used the notation
$$
|\mathcal{A}|_{\alpha,P}^{k,p_0} \vec{h}(x) = \sum_{Q \in \mathscr{D}(P), Q^{k} \subset P} \alpha_Q  \prod_{i=1}^m \left| \left(\frac{1}{|Q^{(k)}|} \int_{Q^{(k)}} h_i \right) \right|^{\frac{1}{p_0}} \chi_Q(x).
$$
If, $h_i^j=b_i$, then for all $x \notin \Omega_i$ we can see that $|\mathcal{A}|_{\alpha,P}^{k,p_0} (h_1^j, \ldots, h_m^j)(x)=0$ because of the average $0$ of each $b_i^R$. With this fact we can see that the second term in \eqref{Complexity.LebesgueWeakType.eq1} is actually identical to
	\[
		|\{x\in \mathbb{R}^n\setminus \Omega: \, |\mathcal{A}|_{\alpha,P}^{k,p_0} \vec{g}(x) > \lambda\}|.
	\]	

But now we can use the $L^{2p_0}$ bound. Denoting $|\vec{g}|^{1/p_0} = (|g_1|^{1/p_0}, \dots , |g_k|^{1/p_0})$, we have
	\begin{align*}
		|\{x\in \mathbb{R}^n \setminus \Omega: \, |\mathcal{A}|_{\alpha,P}^{k,p_0} \vec{g}(x) > \lambda\}| &\leq \frac{1}{\lambda^{2p_0}} \||\mathcal{A}|_{\alpha,P}^{k,p_0} \vec{g}\|_{L^{2p_0}}^{2p_0} \\
		& \leq \frac{1}{\lambda^{2p_0}} \|\mathcal{A}_{\alpha,P}^{k,p_0} |\vec{g}|^{1/p_0}\|_{L^{2p_0}}^{2p_0} \\
		&\lesssim \frac{1}{\lambda^{2p_0}} \prod_{i=1}^m  \||g_i|^{1/p_0}\|_{L^{2p_0m}}^{2p_0} \\
		& \lesssim \frac{1}{\lambda^{p/m}} \prod_{i=1}^m \|g_i\|_{L^{1}}^{1/m}\\
		& \lesssim \frac{1}{\lambda^{p/m}}.
	\end{align*}
	
Putting both estimates together we arrive at the desired result. This completes the proof of theorem \ref{theoremA}.

\section{Proof of Theorem \ref{theoremB} and Theorem \ref{theoremC}}
\label{secapp}

\begin{proof}[Proof of Theorem~\ref{theoremB}]
For this, we borrow some ideas from \cite[Theorem 3.2]{LMS}, where the case $p_0=1$ is considered. Throughout the proof, we set $a=p/p_0$ and $a_i=p_i/p_0$ for $i=1,\dots,m$. Let $\sigma_i=w_i^{1-a_i'}$, $\vec{f}_{\sigma,p_0} =(f_1\sigma_1^{1/p_0},\dotsc, f_m\sigma_m^{1/p_0})$ and $f_i\geq 0$. We have $\sigma_i, \nw \in A_\infty$ (see \cite[Theorem 3.6]{LOPTT}). It suffices to prove that
\begin{equation}\label{eq1-thm1.1}
\|\mathcal{A}^{p_0}_{\mathscr{D},S}(\vec{f}_{\sigma,p_0})\|_{L^p(\nu_{\vec{w}})} \lesssim [\vec{w}]_{A_{\vec{P}/p_0}}^{\max(1,\tfrac{a_1'}{p_0a},\dotsc,\tfrac{a_m'}{p_0a})} \prod_{i=1}^m \|f_i\|_{L^{p_i}(\sigma_i)}.
\end{equation}

By definition, for any cube $Q\subset \mathbb{R}^n$, we have
\begin{equation}\label{eq:multiap_LOPTT}
[\vec{w}]_{A_{\vec{P}/p_0}}\geq \Big(\frac{1}{|Q|}\int_Q\nu_{\vec w}\Big)\prod_{j=1}^m\Big(\frac{1}{|Q|}\int_Q w_j^{1-a'_j}\Big)^{a/a'_j}.
\end{equation}
Denote $\beta=\max(1,\tfrac{a_1'}{p_0a},\dotsc,\tfrac{a_m'}{p_0a})$, and assume that $0\leq g\in L^{p'}(\nu_{\vec{w}})$. We have
$$
\int_{\mathbb{R}^n}\mathcal{A}^{p_0}_{\mathscr{D},S}(\vec{f}_{\sigma,p_0}) g\nu_{\vec{w}}=\sum_{Q\in \mathcal{S}}\int_Q g\nw \times \left(\prod_{i=1}^m \Big(\f{1}{|Q|}\int_Q|f_i|^{p_0}\sigma_i\Big)^{1/p_0}\right)
$$
From this and the definition of $[\vec{w}]_{A_{\vec{P}/p_0}}$, we obtain
$$
\begin{aligned}
\sum_{Q\in \mathcal{S}}\int_Q g\nw& \times \Big(\prod_{i=1}^m\f{1}{|Q|}\int_Q |f_i|^{p_0}\sigma_i\Big)^{1/p_0}\\
&\leq [\vec{w}]_{A_{\vec{P}}}^{\beta}\sum_{Q\in \mathcal{S}}\f{|Q|^{m(\beta a-1/p_0)}}{\nw(Q)^{\beta-1}\prod_{i=1}^m \sigma_i(Q)^{(\beta a/a'_i-1/p_0)}}\times \Big(\f{1}{\nw(Q)}\int_Q g\nw\Big)\\
& \ \ \ ~ ~ ~   \times \Big(\prod_{i=1}^m \f{1}{\sigma_i(Q)}\int_Q |f_{i}|^{p_0}\sigma_i\Big)^{1/p_0}\\
&\leq 2^{m(\beta a-1/p_0)}[\vec{w}]_{A_{\vec{P}}}^{\beta}\sum_{Q\in \mathcal{S}}\f{|E_Q|^{m(\beta a-1/p_0)}}{\nw(Q)^{\beta-1}\prod_{i=1}^m \sigma_i(Q)^{(\beta a/a'_i-1/p_0)}}\times \Big(\f{1}{\nw(Q)}\int_Q g\nw\Big)\\
& \ \ \ ~ ~ ~  \times \Big(\prod_{i=1}^m \f{1}{\sigma_i(Q)}\int_Q |f_{i}|^{p_0}\sigma_i\Big)^{1/p_0}\\
&\leq 2^{m(\beta a-1/p_0)}[\vec{w}]_{A_{\vec{P}}}^{\beta}\sum_{Q\in \mathcal{S}}\f{|E_Q|^{m(\beta a-1/p_0)}}{\nw(E_Q)^{\beta-1}\prod_{i=1}^m \sigma_i(E_Q)^{(\beta a/a'_i-1/p_0)}}\times \Big(\f{1}{\nw(Q)}\int_Q g\nw\Big)\\
& \ \ \ ~ ~ ~  \times \Big(\prod_{i=1}^m \f{1}{\sigma_i(Q)}\int_Q |f_{i}|^{p_0}\sigma_i\Big)^{1/p_0},\\
\end{aligned}
$$
where in the last inequality we used the facts $ \nw(Q) \geq \nw(E_Q)$, $\sigma_i(Q)\geq \sigma_i(E_Q)$ and  the positivity of the exponents. On the other hand, by H\"older's inequality, we have
\begin{equation}\label{eq2-thm1.1}
|E_Q|=\int_{E_Q}\nw^{\f{1}{ma}}\prod_{i=1}^m\sigma_i^{\f{1}{ma_i'}}\leq \nw(E_Q)^{\f{1}{ma}}\prod_{i=1}^m\sigma_i(E_Q)^{\f{1}{ma_i'}}.
\end{equation}
Inserting this into the estimate above we conclude that
$$
\begin{aligned}
\sum_{Q\in \mathcal{S}}&\int_Q g\nw \times \Big(\prod_{i=1}^m\f{1}{|Q|}\int_Q |f_i|^{p_0}\sigma_i\Big)^{1/p_0}\\
&\leq 2^{m(\beta a-1/p_0)}[\vec{w}]_{A_{\vec{P}}}^{\beta}\sum_{Q\in \mathcal{S}}\f{\nw(E_Q)^{(\beta a -1/p_0)/a}\prod_{i=1}^m\sigma_i(E_Q)^{(\beta a -1/p_0)/a_i'}}{\nw(E_Q)^{\beta-1}\prod_{i=1}^m \sigma_i(E_Q)^{(\beta a/a'_i-1/p_0)}}\times \Big(\f{1}{\nw(Q)}\int_Q g\nw\Big)\\
& \ \ \ ~ ~ ~  \times \Big(\prod_{i=1}^m \f{1}{\sigma_i(Q)}\int_Q |f_{i}|^{p_0}\sigma_i\Big)^{1/p_0}\\
&\leq 2^{m(\beta a-1/p_0)}[\vec{w}]_{A_{\vec{P}}}^{\beta}\sum_{Q\in \mathcal{S}}\nw(E_Q)^{1-\f{1}{ap_0}}\prod_{i=1}^m\sigma_i(E_Q)^{\f{1}{p_0a_i}}\times \Big(\f{1}{\nw(Q)}\int_Q g\nw\Big)\\
& \ \ \ ~ ~ ~  \times \Big(\prod_{i=1}^m \f{1}{\sigma_i(Q)}\int_Q |f_{i}|^{p_0}\sigma_i\Big)^{1/p_0}\\
&= 2^{mq(\beta a-1/p_0)}[\vec{w}]_{A_{\vec{P}}}^{\beta}\sum_{Q\in \mathcal{S}}\nw(E_Q)^{\f{1}{p'}}\prod_{i=1}^m\sigma_i(E_Q)^{\f{1}{p_i}}\Big(\f{1}{\nw(Q)}\int_Q g\nw\Big)\\
&\quad \quad \quad  \times \Big(\prod_{i=1}^m \f{1}{\sigma_i(Q)}\int_Q |f_i|^{p_0}\sigma_i\Big)^{1/p_0}\\
&= 2^{mq(\beta a-1/p_0)}[\vec{w}]_{A_{\vec{P}}}^{\beta }\sum_{Q\in \mathcal{S}} \left[ \Big(\f{1}{\nw(Q)}\int_Q g\nw\Big)\nw(E_Q)^{\f{1}{p'}} \right]\\     &\quad \quad \quad \times \left[\prod_{i=1}^m \Big(\f{1}{\sigma_i(Q)}\int_Q |f_i|^{p_0}\sigma_i\Big) \sigma_i(E_Q)^{\f{p_0}{p_i}}          \right]^{1/p_0}.
\end{aligned}
$$
This, together with H\"older's inequality and the disjointness of the family $\{E_Q\}_{Q\in \mathcal{S}}$ yields
$$
\begin{aligned}
\sum_{Q\in \mathcal{S}}\int_Q g\nw \times \Big(\prod_{i=1}^m\f{1}{|Q|}\int_Q& |f_i|^{p_0}\sigma_i\Big)^q\\
&\leq 2^{mq(\beta a-1/p_0)}[\vec{w}]_{A_{\vec{P}}}^{\beta} \Big[\sum_{Q\in \mathcal{S}} \Big(\f{1}{\nw(Q)}\int_Q g\nw\Big)^{p'}\nw(E_Q)\Big]^{\f{1}{p'}}\\
 & \ \ \ \times \prod_{i=1}^m \Big[\sum_{Q\in \mathcal{S}}\Big(\f{1}{\sigma_i(Q)}\int_Q |f_i|^{p_0}_{i}\sigma_i\Big)^{\f{p_i}{p_0}}\sigma_i(E_Q)\Big]^{\f{1}{p_i}}\\
&\leq 2^{m(\beta a-1/p_0)}[\vec{w}]_{A_{\vec{P}}}^{\beta}\|M_{\nw}^{\mathscr{D}}(g)\|_{L^{p'}(\nw)} \times \prod_{i=1}^m \|M_{\sigma_i}^{\mathscr{D}}(|f_i|^{p_0})\|^{1/p_0}_{L^{p_i/p_0}(\sigma_i)}\\
&{\lesssim} 2^{m(\beta a-1/p_0)}[\vec{w}]_{A_{\vec{P}}}^{\beta} \|g\|_{L^{p'}(\nw)} \times \prod_{i=1}^m\|f_i^{p_0}\|^{1/p_0}_{L^{p_i/p_0}(\sigma_i)}\\
&{=} 2^{m(\beta a-1/p_0)}[\vec{w}]_{A_{\vec{P}}}^{\beta} \|g\|_{L^{p'}(\nw)} \times \prod_{i=1}^m\|f_i\|_{L^{p_i}(\sigma_i)},
\end{aligned}
$$
applying (\ref{bbs}) to get last inequality. This proves \eqref{eq1-thm1.1}.
\end{proof}

The following proposition plays an important role in proving Theorem \ref{theoremC}.
\begin{prop}\label{prop4.2} Let $T$ satisfy (H1) and (H2). Then, for any cube $Q\subset\Rn$, we have

    $$
\omega_\lambda(T\vec{f}, Q)\leq c(T,\lambda,m, n)\sum_{\ell=0}^\vc 2^{-\ell\delta_0}\Big(\prod_{i=1}^m\f{1}{|2^\ell Q|}\int_{2^\ell Q}|f_i(y)|^{p_0}dy\Big)^{1/p_0},
$$
where $\delta_0=\delta-n/p_0$.
\end{prop}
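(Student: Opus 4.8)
The plan is to produce a single constant $c$ and estimate $\big((T\vec{f}-c)\chi_Q\big)^*(\lambda|Q|)$, which dominates $\omega_\lambda(T\vec{f},Q)$. The first step is to enlarge the cube: set $R=2Q$, so that $Q=\tfrac12 R$ and the regularity hypothesis (H2) is available on $R$ for all $x,\overline x\in Q$. Decompose each function as $f_i=f_i^0+f_i^\infty$ with $f_i^0=f_i\chi_{R}$ and $f_i^\infty=f_i\chi_{\R^n\setminus R}$, and expand by multilinearity:
\[
T\vec{f} = T(f_1^0,\dots,f_m^0) + \sum_{\alpha} T\big(f_1^{\alpha_1},\dots,f_m^{\alpha_m}\big),
\]
where $\alpha$ runs over the $2^m-1$ elements of $\{0,\infty\}^m$ having at least one coordinate equal to $\infty$. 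Fix any $\overline x\in Q$ and put $c:=\sum_{\alpha}T(f_1^{\alpha_1},\dots,f_m^{\alpha_m})(\overline x)$. For each such $\alpha$ at least one factor is supported in $\R^n\setminus R$, so the integral representation of $T$ is valid at $\overline x$ and at every $x\in Q$ (we stay off the diagonal); as usual one first reduces by density to $f_i$ nice enough for these pointwise evaluations to make sense.

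Next I would estimate the \emph{global} pieces. For each admissible $\alpha$ and every $x\in Q=\tfrac12 R$, write $T(f_1^{\alpha_1},\dots,f_m^{\alpha_m})(x)-T(f_1^{\alpha_1},\dots,f_m^{\alpha_m})(\overline x)$ as the integral of $K(x,\vec y)-K(\overline x,\vec y)$ against $\prod_i f_i^{\alpha_i}(y_i)$, slice $(\R^n)^m=\bigcup_{\vec j} S_{j_1}(R)\times\dots\times S_{j_m}(R)$, apply H\"older in each $y_i$ with exponents $p_0'$ and $p_0$, and invoke (H2) on each slice (every admissible $\alpha$ forces some $j_i\ge1$, so $\vec j\neq(0,\dots,0)$). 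Using $|x-\overline x|\lesssim_n|R|^{1/n}$, the identity $\delta-\delta_0=n/p_0$, and $\|f_i\|_{L^{p_0}(S_{j_i}(R))}\le|2^{j_0}R|^{1/p_0}\big(\tfrac1{|2^{j_0}R|}\int_{2^{j_0}R}|f_i|^{p_0}\big)^{1/p_0}$ with $j_0=\max_k j_k$, the $\vec j$-slice collapses to $\lesssim_{n,m} 2^{-m\delta_0 j_0}\prod_i\big(\tfrac1{|2^{j_0}R|}\int_{2^{j_0}R}|f_i|^{p_0}\big)^{1/p_0}$. Summing over the at most $C_m(\ell+1)^{m-1}$ tuples $\vec j$ with $j_0=\ell$, absorbing the polynomial factor into $2^{-(m-1)\delta_0\ell}$ (using $m\ge1$, $\delta_0>0$), and re-indexing $2^{j_0}R=2^{\ell+1}Q$, I would obtain $\sum_\alpha\big|T(f_1^{\alpha_1},\dots,f_m^{\alpha_m})(x)-T(\dots)(\overline x)\big|\le G$ for \emph{every} $x\in Q$, with $G\le c(T,m,n)\sum_{\ell\ge0} 2^{-\ell\delta_0}\prod_{i=1}^m\big(\tfrac1{|2^\ell Q|}\int_{2^\ell Q}|f_i|^{p_0}\big)^{1/p_0}$.

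Finally I would dispose of the \emph{local} term using (H1). Since $\{x\in Q:|T\vec{f}(x)-c|>G+\beta\}\subseteq\{x\in Q:|T(f_1^0,\dots,f_m^0)(x)|>\beta\}$ and (H1) gives
\[
\big|\{x\in Q:|T(f_1^0,\dots,f_m^0)(x)|>\beta\}\big|\le \beta^{-p_0/m}\big\|T(f_1^0,\dots,f_m^0)\big\|_{L^{p_0/m,\infty}}^{p_0/m}\lesssim \beta^{-p_0/m}\Big(\prod_{i=1}^m\|f_i\|_{L^{p_0}(2Q)}\Big)^{p_0/m},
\]
this quantity is $<\lambda|Q|$ as soon as $\beta>\beta_0$, where $\beta_0=c(T,\lambda,m,n)\prod_{i=1}^m\big(\tfrac1{|2Q|}\int_{2Q}|f_i|^{p_0}\big)^{1/p_0}$ is, up to constants, the $\ell=1$ summand of the target bound. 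Hence $\big((T\vec{f}-c)\chi_Q\big)^*(\lambda|Q|)\le G+\beta_0$, and passing to the infimum over constants gives $\omega_\lambda(T\vec{f},Q)\le c(T,\lambda,m,n)\sum_{\ell\ge0}2^{-\ell\delta_0}\prod_{i=1}^m\big(\tfrac1{|2^\ell Q|}\int_{2^\ell Q}|f_i|^{p_0}\big)^{1/p_0}$.

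The main obstacle is the mismatch between the cube $Q$ on which the oscillation lives and the half-dilation built into (H2): applying (H2) directly on $Q$ would only control the global pieces on $\tfrac12 Q$, and $Q\setminus\tfrac12 Q$ is far too large to be absorbed into an exceptional set of measure $\lambda|Q|$ (recall $\lambda$ will be taken very small). Passing to $R=2Q$ resolves this at the cost of a harmless shift $2^\ell Q\mapsto 2^{\ell+1}Q$. A secondary, purely bookkeeping, point is that the number of annular tuples with a prescribed maximum grows polynomially in $\ell$; absorbing this into the geometric decay $2^{-m\delta_0 j_0}$ is exactly what degrades the final rate from $m\delta_0$ down to $\delta_0$, which is why the statement is phrased with $2^{-\ell\delta_0}$.
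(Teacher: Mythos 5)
Your proof is correct and follows essentially the same route as the paper's: a $0/\infty$ decomposition of each $f_i$, the local piece handled by the weak-type assumption (H1), and the global pieces handled by the $L^{p_0'}$ kernel regularity (H2) via annular slicing. The only differences are cosmetic --- you dilate by $2Q$ where the paper uses $4Q$, you take an arbitrary point $\overline x\in Q$ where the paper takes the center $x_0$, and you spell out the annular estimate that the paper delegates to \cite[Theorem 3.1]{BD} --- none of which changes the substance.
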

\begin{proof}
The proof of this proposition is standard. For reasons of completeness, we sketch it here. For each $i=1,\dots,m$, we define $f_i^0=f_i\chi_{Q^*}$ and $f_i^\vc=f_i -f_i^0$. Setting Let $\vec{f}^0=(f_1\chi_{4Q},\ldots, f_m\chi_{4Q})$,
then we have
    \begin{equation}\label{eq1-prop4.2}
    T(\vec f)(z) = T(\vec {f}^0)(z)+\sum_{\vec{\alpha}\in \mathcal{I}_0}  T(f_1^{\a_1},\ldots,f_m^{\a_m})(z),
  \end{equation}
where $\mathcal{I}_0:=\{\vec{\alpha}=(\alpha_1,\dots,\alpha_m): \, \alpha_i\in\{0,\vc\}, \ \text{ and at least one $\alpha_i\neq 0$}\}$. We first observe that
$$
\begin{aligned}
\Big[&\Big(T(\vec{f})-\sum_{\vec{\alpha}\in \mathcal{I}_0}  T(f_1^{\a_1},\ldots,f_m^{\a_m})(x_0)\Big)\chi_Q\Big]^*(\lambda|Q|)\\
&\leq 2(T(\vec{f^0})\chi_Q)^*(\lambda|Q|/2)+2\Big\|\sum_{\vec{\alpha}\in \mathcal{I}_0} T(f_1^{\a_1},\ldots,f_m^{\a_m})(\cdot)-\sum_{\vec{\alpha}\in \mathcal{I}_0}  T(f_1^{\a_1},\ldots,f_m^{\a_m})(x_0)\Big\|_{L^\vc(Q)},
\end{aligned}
$$
where $x_0$ is the center of $Q$. Since $T$ maps $L^{p_0}\times\ldots\times L^{p_0}$ into $L^{p_0/m,\vc}$, we have
$$
\begin{aligned}
(T\vec{f}^0)^*(\lambda|Q|)&\leq C_{n,T, \lambda}\|T\vec{f}^0\|_{L^{p_0/m,\vc}(Q,\f{dx}{|Q|})}\\
&\leq C_{n,T, \lambda}\Big(\prod_{i=1}^m\f{1}{|4Q|}\int_{4Q}|f_i(y)|^{p_0}dy\Big)^{1/p_0}.
\end{aligned}
$$

On the other hand, for $x\in Q$, the argument in \cite[Theorem 3.1]{BD} proved that
\begin{equation}\label{eq1-prop}
\begin{aligned}
\Big|\sum_{\vec{\alpha}\in \mathcal{I}_0} T(f_1^{\a_1},\ldots,f_m^{\a_m})(x)&-\sum_{\vec{\alpha}\in \mathcal{I}_0}  T(f_1^{\a_1},\ldots,f_m^{\a_m})(x_0)\Big|\\
&\leq C_{n,m, T}\sum_{\ell=0}^\vc 2^{-\ell\delta_0}\Big(\prod_{i=1}^m\f{1}{|2^\ell Q|}\int_{2^\ell Q}|f_i(y)|^{p_0}dy\Big)^{1/p_0}
\end{aligned}
\end{equation}
with $\delta_0=\delta-n/p_0$. Taking the last two estimates into account we obtain
$$
\begin{aligned}
\Big[\Big(T(\vec{f})&-\sum_{\vec{\alpha}\in \mathcal{I}_0}  T(f_1^{\a_1},\ldots,f_m^{\a_m})(x_0)\Big)\chi_Q\Big]^*(\lambda|Q|)\\
&\leq  C_{n,m,T,\lambda}\sum_{\ell=0}^\vc 2^{-\ell\delta_0}\Big(\prod_{i=1}^m\f{1}{|2^\ell Q|}\int_{2^\ell Q}|f_i(y)|^{p_0}dy\Big)^{1/p_0}.
\end{aligned}
$$
This completes our proof.
\end{proof}

At this stage, Theorem \ref{theoremC} follows immediately from Theorem \ref{theoremB} via the following short argument:

\begin{theorem}
 \label{th2} Let $T$ satisfy (H1) and (H2) and let $\mathbb{X}$ be a quasi-Banach function space. Then we have
   \begin{equation}\label{eq:22}
    \|T(\vec f\,)\|_{\mathbb{X}}\lesssim_{T,m,n}\sup_{{\mathscr{D}},{\mathcal S}}\|   \mathcal{A}^{p_0}_{\mathscr{D},S}(\vec{f}) \|_{\mathbb{X}}.
  \end{equation}
  \end{theorem}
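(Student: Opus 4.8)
The plan is to combine the pointwise decomposition coming from Lerner's local mean oscillation formula (Theorem~\ref{decom1}) with the oscillation estimate for $T$ provided by Proposition~\ref{prop4.2}, and then to recognize the resulting dominating sum as a sum of the operators $\mathcal{T}^{k,p_0}_{\mathcal{S}}$ to which Theorem~\ref{theoremA} applies. First I would fix a large cube $Q_0$ and apply Theorem~\ref{decom1} to the function $T(\vec{f})$ on $Q_0$: there is a sparse family $\mathcal{S}_0 \subset \mathscr{D}(Q_0)$ with
$$
|T(\vec{f})(x) - m_{T(\vec{f})}(Q_0)| \le 2\sum_{Q\in \mathcal{S}_0} \omega_{\frac{1}{2^{n+2}}}(T(\vec{f}); Q)\, \chi_Q(x)
$$
for a.e.\ $x\in Q_0$. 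By Proposition~\ref{prop4.2} (applied with $\lambda = 2^{-(n+2)}$), each oscillation term is bounded by $c\sum_{\ell=0}^\infty 2^{-\ell\delta_0}\prod_{i=1}^m \langle f_i\rangle_{2^\ell Q, p_0}$, with $\delta_0 = \delta - n/p_0 > 0$ by (H2). Substituting and interchanging the sums, the right-hand side becomes
$$
c\sum_{\ell=0}^\infty 2^{-\ell\delta_0} \sum_{Q\in \mathcal{S}_0} \Bigl(\prod_{i=1}^m \langle f_i\rangle_{2^\ell Q, p_0}\Bigr)\chi_Q(x) = c\sum_{\ell=0}^\infty 2^{-\ell\delta_0}\, \mathcal{T}^{\ell,p_0}_{\mathcal{S}_0}(\vec{f})(x),
$$
so we have a pointwise bound $|T(\vec{f})| \le |m_{T(\vec{f})}(Q_0)| + c\sum_{\ell\ge 0} 2^{-\ell\delta_0}\mathcal{T}^{\ell,p_0}_{\mathcal{S}_0}(\vec{f})$ on $Q_0$.

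Next I would pass to the quasi-norm in $\mathbb{X}$. The median term $|m_{T(\vec{f})}(Q_0)|$ must be handled by letting $Q_0$ exhaust $\mathbb{R}^n$: using \eqref{pro1} together with the weak-type bound (H1), $|m_{T(\vec{f})}(Q_0)| \lesssim |Q_0|^{-m/p_0}\prod_i \|f_i\|_{L^{p_0}} \to 0$ as $|Q_0|\to\infty$ when the $f_i$ have compact support, so this contribution vanishes in the limit (and the general case follows by the density argument already invoked in the proof of Theorem~\ref{theoremA}). Then, by the quasi-triangle inequality in $\mathbb{X}$ in the form adapted to quasi-Banach function spaces (as in \cite{CR}), since $\sum_\ell 2^{-\ell\delta_0} < \infty$ and $\delta_0>0$, I can sum the series: using that for a $\rho$-normed space one has $\|\sum_\ell g_\ell\|_{\mathbb{X}}^\rho \le \sum_\ell \|g_\ell\|_{\mathbb{X}}^\rho$ and then Theorem~\ref{theoremA} to dominate each $\|\mathcal{T}^{\ell,p_0}_{\mathcal{S}_0}(\vec{f})\|_{\mathbb{X}} \le C(\ell+1)\|\mathcal{A}^{0,p_0}_{\mathcal{S}'_\ell}(\vec{f})\|_{\mathbb{X}} \le C(\ell+1)\sup_{\mathscr{D},\mathcal{S}}\|\mathcal{A}^{p_0}_{\mathscr{D},\mathcal{S}}(\vec{f})\|_{\mathbb{X}}$, I obtain
$$
\|T(\vec{f})\|_{\mathbb{X}} \lesssim \Bigl(\sum_{\ell=0}^\infty 2^{-\ell\rho\delta_0}(\ell+1)^\rho\Bigr)^{1/\rho} \sup_{\mathscr{D},\mathcal{S}}\|\mathcal{A}^{p_0}_{\mathscr{D},\mathcal{S}}(\vec{f})\|_{\mathbb{X}},
$$
and the geometric-type series in $\ell$ converges since $\delta_0>0$, giving \eqref{eq:22}.

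The main obstacle I anticipate is bookkeeping around the quasi-Banach nature of $\mathbb{X}$: one cannot freely use the triangle inequality, so the summation over $\ell$ has to be carried out using the Aoki--Rolewicz $\rho$-norm structure, and the $\ell$-dependent constant $C(\ell+1)$ from Theorem~\ref{theoremA} (which is linear in $\ell$, hence harmless) must be tracked so that convergence of $\sum_\ell (\ell+1)^\rho 2^{-\ell\rho\delta_0}$ is guaranteed — this is exactly where the quantitative gain $\delta > n/p_0$ in (H2) is used. A secondary technical point is justifying the localization to a cube $Q_0$ and the vanishing of the median term; this is routine but needs the density/limiting argument already set up for Theorem~\ref{theoremA}, together with the observation that $\mathcal{S}_0$ can be taken inside $\mathscr{D}(Q_0)$ so that everything is legitimately a sparse operator over a genuine dyadic grid before taking the supremum. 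Granting Theorem~\ref{th2}, Theorem~\ref{theoremC} then follows by combining it with Theorem~\ref{theoremB} applied to $\mathbb{X}=L^p(\nu_{\vec{w}})$, taking the supremum over dyadic grids and sparse families on the right.
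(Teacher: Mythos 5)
Your proposal is correct and follows essentially the same path as the paper's proof: apply Lerner's local mean oscillation formula (Theorem~\ref{decom1}) to $T(\vec f)$ on a cube $Q_0$, bound the oscillations by Proposition~\ref{prop4.2}, interchange the sums over the sparse family and over $\ell$ to recognize the operators $\mathcal{T}^{\ell,p_0}_{\mathcal{S}_0}$, show the median term vanishes as $|Q_0|\to\infty$ via (H1) and \eqref{pro1}, and then use Theorem~\ref{theoremA} together with convergence of $\sum_\ell (\ell+1)2^{-\ell\delta_0}$. If anything, your treatment is slightly more careful than the paper's on the summation step: the paper writes $\|T\vec f\|_{\mathbb X}\lesssim \sum_\ell 2^{-\ell\delta_0}\sup_{\mathcal S}\|\mathcal{T}^{p_0}_{\mathcal S,\ell}\vec f\|_{\mathbb X}$ directly, which for a genuine quasi-Banach $\mathbb{X}$ needs the Aoki--Rolewicz $\rho$-renorming you invoke to justify summing the series.
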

\begin{proof}
From Theorem \ref{decom1} and Proposition \ref{prop4.2}, for $Q_0\in \mathscr{D}$, we can pick a sparse family $\mathcal{S}(Q_0)\in \mathscr{D}(Q_0)$ so that
$$
\begin{aligned}
|T\vec{f}(x)&-m_{T\vec{f}}(Q_0)|\leq c(T,n,m)\sum_{Q\in \mathcal{S}(Q_0)}\sum_{\ell=0}^\vc 2^{-\ell\delta_0}\Big(\prod_{i=1}^m\f{1}{|2^\ell Q|}\int_{2^\ell Q}|f_i(y)|^{p_0}dy\Big)^{1/p_0}\chi_{Q}(x),
\end{aligned}
$$
for a.e. $x\in Q_0$. Since $T$ maps $L^{p_0}\times\ldots\times L^{p_0}$ into $L^{p_0/m,\vc}$, $\lim_{|Q_0|\to \vc}m_{T\vec{f}}(Q_0)=0$ provided $f_i\in L^{p_0}, i=1,\dots,m$. Hence, using Fatou's lemma, we obtain that
$$
\|T\vec{f}\|_{\mathbb{X}}\lesssim_{T,n,m}\sum_{\ell=0}^\vc 2^{-\ell\delta_0} \sup_{\mathcal{S}\in \mathscr{D}}\|\mathcal{T}^{p_0}_{\mathcal{S},\ell}\vec{f}\|_{\mathbb{X}}.
$$
This along with Theorem \ref{theoremA} implies that
$$
\|T(\vec f\,)\|_{\mathbb{X}}\lesssim_{T,m,n}\sup_{{\mathscr{D}},{\mathcal S}}\|   \mathcal{A}^{p_0}_{\mathscr{D},S}(\vec{f}) \|_{\mathbb{X}}.
$$
\end{proof}

\section{Applications to certain singular integral operators with nonsmooth kernels}\label{App-sec}
\subsection{Linear Fourier Multipliers}
Let $m$ be a bounded function on $\mathbb{R}^n$. We define the multiplier operator $T_m$ by setting
$$
(T_mf) {\hat{}}(x)=m(x)\hat{f}(x)
$$
where $\hat{f}$ is the Fourier transform of $f$.

\medskip

Let $s\geq 1$, $l$ be a positive integer and $\alpha=(\alpha_1,\ldots,\alpha_n)$ be a multi-index of nonnegative integers $\alpha_j$ with the length $|\alpha|=|\alpha_1|+\ldots+|\alpha_n|$. According to \cite{KW}, we say that the function $m\in M(s,l)$ if
\begin{equation}\label{cond on m}
\sup_{R>0}\Big(R^{s|\alpha|-n}\int_{R<|x|<2R}|\partial^\alpha m(x)|^s dx\Big)^{1/s}< +\vc \ \text{for all $|\alpha|\leq l$}.
\end{equation}

In \cite{Ho}, H\"ormander showed that if $m\in M(2,l)$ and $l>n/2$, then the associated operators $T_m$ are bounded on $L^p$ for $1<p<\vc$. The condition $m\in M(s,l)$ with $s\geq 2$ and $l>n/s$ was considered by \cite{CT}. In \cite{KW}, the authors consider the class of $m\in M(s,l)$  with  $s\leq 2$ and $l>n/s$. The following result is a direct consequence of Theorem 1 in \cite{KW}.

\vskip0.5cm

\begin{theorem}\label{thmTm}
Let $1<s\leq 2$ and $m\in M(s,l)$  with $n/s<l<n$. Then $T_m$ is bounded on $L^p$ for $1<p<\infty$.
\end{theorem}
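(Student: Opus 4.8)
The plan is to recall that this statement is a special case of \cite[Theorem~1]{KW}, and to indicate the structure of the argument behind it, since the very same kernel estimates are what one needs later in order to place $T_m$ inside the class of operators covered by Theorem~\ref{theoremC}. The first step will be to note that the hypothesis $l>n/s$ already forces $m\in L^\infty(\mathbb{R}^n)$: rescaling each annulus $\{R<|x|<2R\}$ to unit size and using the Sobolev embedding $W^{l,s}\hookrightarrow L^\infty$ (valid precisely because $ls>n$) bounds $\|m\|_{L^\infty(\{R<|x|<2R\})}$ by the supremum appearing in \eqref{cond on m}, uniformly in $R$. In particular $T_m$ is bounded on $L^2$ by Plancherel, and it remains to treat $p\neq 2$.

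The heart of the matter is a regularity estimate for the convolution kernel $K$ of $T_m$. I would take a Littlewood--Paley decomposition $m=\sum_{j\in\mathbb{Z}}m_j$ with $\supp m_j\subset\{2^{j-1}\le|x|\le 2^{j+1}\}$, so that $K=\sum_j K_j$ with $K_j=m_j^{\vee}$, and then use the Hausdorff--Young inequality $\|\widehat g\|_{L^{s'}}\le\|g\|_{L^s}$ --- this is exactly where the bound $s\le 2$ is used --- to convert the multiplier estimates $\|\partial^\alpha m_j\|_{L^s}\lesssim 2^{j(n/s-|\alpha|)}$, $|\alpha|\le l$, into $L^{s'}$-averaged decay bounds for $K_j$ and $\nabla K_j$ on the dyadic shells $\{|x|\sim 2^k\}$. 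Summing in $j$, the resulting geometric series converging because $l>n/s$, yields an $L^{s'}$-type (localized) H\"ormander regularity estimate for $K$; in the terminology of the present paper, $T_m$ then satisfies (H1) with $p_0=s$ (the weak $(s,s)$ bound follows from the $L^2$ bound together with this H\"ormander estimate by the usual Calder\'on--Zygmund argument) and (H2) in the linear case, with $p_0=s$ and some $\delta\in(n/s,l]$. Since $m(-\,\cdot)$ also lies in $M(s,l)$, the transpose operator is of the same type. From this the unweighted conclusion follows either from the classical Calder\'on--Zygmund theory for operators with rough kernels (the route of \cite{KW}, giving weak $(1,1)$ and hence $L^p$ for $1<p\le 2$, and then $p\ge 2$ by duality) or, staying within the framework developed above, directly from Theorem~\ref{theoremC} applied with trivial weights, which yields $L^p$ boundedness for $p>s$, complemented by duality for $p<s'$ and by Plancherel at $p=2$; since $s\le 2\le s'$, these ranges together exhaust $(1,\infty)$.

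The only nonroutine point is the kernel estimate above when $l$ exceeds $n/s$ by only a little: one has to extract \emph{fractional} smoothness of $K$ out of the integrated derivative bounds on $m$, and the restriction $s\le 2$ is indispensable there because Hausdorff--Young is precisely the device that converts the $L^s$-based multiplier condition into $L^{s'}$-based kernel decay. (The upper bound $l<n$ is not really a constraint: if $l\ge n$ the smoothness exceeds $n\ge n/s$ and $T_m$ is already covered by classical H\"ormander--Mikhlin type theorems, so $n/s<l<n$ singles out exactly the ``limited smoothness'' regime of interest here.)
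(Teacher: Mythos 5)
Your proposal is correct and follows the same route as the paper, which disposes of this statement by simply invoking \cite[Theorem~1]{KW}. Your unpacking of the Kurtz--Wheeden mechanism (Littlewood--Paley decomposition of $m$, Hausdorff--Young to convert the $L^s$ bounds on $\partial^\alpha m$ into $L^{s'}$-averaged decay of the kernel on dyadic shells, then the Calder\'on--Zygmund machinery together with duality, noting that $s\le 2$ is exactly what Hausdorff--Young requires) is an accurate account of what lies behind that citation, and the observation that $l>n/s$ forces $m\in L^\infty$ and hence $L^2$-boundedness by Plancherel is the right starting point.
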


Moreover, the folloeing estimate follows from  \cite[Lemma 1]{KW}:

\begin{lemma}\label{kernelestimates 2}
Let $1<s\leq 2$ and $m\in M(s,l)$  with $n/s<l<n$. Then for any $p_0>n/l$ there exists $\epsilon>0$ such that for any ball $B$, and $x, \overline{x}\in B$, there holds
$$
\begin{aligned}
\Big(\int_{S_k(B)}|K(x,y)-K(\overline{x},y)|^{p'_0}dy\Big)^{1/p'_0}\leq C \f{2^{-k\epsilon}}{(2^kr_B)^{n/p_0}}, \; p_0 < \frac{n}{l},
\end{aligned}
$$
for all $k\geq 2$, where $K(x,y)$ is the associated kernel of $T_m$.
\end{lemma}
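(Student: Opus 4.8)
The plan is to exploit that $T_m$ is a convolution operator: its kernel has the form $K(x,y)=\kappa(x-y)$ with $\widehat\kappa=m$, so, substituting $z=x-y$ and writing $h=x-\overline x$ (note $|h|\le 2r_B$) and $R:=2^kr_B$, the desired inequality follows once we prove
\[
 \big\|\kappa(\cdot)-\kappa(\cdot-h)\big\|_{L^{p'_0}(\{z:\ |z|\sim R\})}\le C\,2^{-k\epsilon}R^{-n/p_0},\qquad k\ge 2,
\]
uniformly in such $h$ (one uses that for $x\in B$ and $k\ge 2$ the set $x-S_k(B)$ is contained in $\{z:\ |z|\sim R\}$). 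To that end one decomposes $m$ in Littlewood--Paley fashion, $m=\sum_{j\in\Z}m_j$ with $m_j=m\,\psi(2^{-j}\cdot)$ supported in $\{|\xi|\sim 2^j\}$, and sets $\kappa=\sum_j K_j$, $\widehat{K_j}=m_j$.

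The core of the proof is a size/smoothness estimate on each piece $K_j$, extracted from the hypothesis $m\in M(s,l)$. Absorbing the cutoff by Leibniz' rule one gets $\|\partial^\alpha m_j\|_{L^s}\lesssim 2^{j(n/s-|\alpha|)}$ for all $|\alpha|\le l$; since $1<s\le 2$, Hausdorff--Young gives $\big\||z|^{|\alpha|}K_j\big\|_{L^{s'}}\lesssim\|\partial^\alpha m_j\|_{L^s}$. Using this for $|\alpha|=0$ and $|\alpha|=l$, restricting to a dyadic annulus $\{|z|\sim\rho\}$ (of measure $\sim\rho^n$), and passing from $L^{s'}$ to $L^{p'_0}$ by H\"older (if $p'_0\le s'$) or by Bernstein (if $p'_0>s'$, using that $K_j$ is band-limited) one obtains
\[
 \|K_j\|_{L^{p'_0}(|z|\sim\rho)}\lesssim \rho^{-n/p_0}\,\min\!\big((2^j\rho)^{\mu},(2^j\rho)^{-\theta}\big),\qquad \theta:=l-\max(n/p_0,n/s),
\]
and, applied to $\nabla K_j$ (note $\xi m_j$ obeys the same bounds with one extra power of $2^j$), $\|\nabla K_j\|_{L^{p'_0}(|z|\sim\rho)}\lesssim 2^j\,\rho^{-n/p_0}\min\big((2^j\rho)^{\mu},(2^j\rho)^{-\theta}\big)$; here $\mu>0$ is an absolute exponent, the first factor in the minimum being relevant when $2^j\rho\le1$ and the second when $2^j\rho\ge1$, and $\theta>0$ precisely because $l>n/s$ and $p_0>n/l$.

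It remains to sum $\kappa(\cdot)-\kappa(\cdot-h)=\sum_j\big(K_j(\cdot)-K_j(\cdot-h)\big)$ over $j\in\Z$ on $\{|z|\sim R\}$, using for each $j$ the better of the \emph{size} bound $\|K_j(\cdot)-K_j(\cdot-h)\|_{L^{p'_0}(|z|\sim R)}\le 2\|K_j\|_{L^{p'_0}(|z|\sim R)}$ and the \emph{smoothness} bound $\le|h|\,\|\nabla K_j\|_{L^{p'_0}(|z|\sim R)}$, the latter via $K_j(z)-K_j(z-h)=\int_0^1 h\cdot\nabla K_j(z-th)\,dt$. In the low-frequency range $2^jR\le1$ the smoothness bound alone gives $\lesssim 2^{-k}R^{-n/p_0}(2^jR)^{1+\mu}$ (using $|h|\le 2r_B=2^{1-k}R$), which sums to $\lesssim 2^{-k}R^{-n/p_0}$. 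In the high-frequency range $2^jR\ge1$, where size gives $R^{-n/p_0}(2^jR)^{-\theta}$ and smoothness gives $2^{-k}R^{-n/p_0}(2^jR)^{1-\theta}$, interpolating between the two with a small exponent $\gamma\in(0,\min(\theta,1))$ gives $\lesssim 2^{-k\gamma}R^{-n/p_0}(2^jR)^{\gamma-\theta}$, which (as $\gamma-\theta<0$) sums to $\lesssim 2^{-k\gamma}R^{-n/p_0}$. Adding the two ranges yields the claim with $\epsilon=\gamma$; alternatively, all these ingredients are already contained in the proof of \cite[Lemma~1]{KW}. The one genuinely delicate point is precisely this bookkeeping: the naive size bound carries no gain in $k$, so the regularity of $K_j$ cannot be discarded in the low frequencies, and in the high frequencies one must trade a controlled fraction $\gamma$ of the $j$-decay for $k$-decay --- producing simultaneously a summable power of $2^j\rho$ and a positive power of $2^{-k}$ is what closes the estimate.
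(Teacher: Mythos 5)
Your proof is correct and uses exactly the approach underlying the paper's justification: the paper offers no proof of its own, merely citing \cite[Lemma~1]{KW}, and your Littlewood--Paley/Hausdorff--Young scheme with interpolation between the size and smoothness bounds on the high-frequency pieces is precisely the mechanism of that lemma. (Incidentally, the condition $p_0<n/l$ appearing inside the displayed estimate of the statement is a typo contradicting the stated hypothesis $p_0>n/l$; your reading is the intended one, and it is what makes $\theta=l-\max(n/p_0,n/s)>0$ so that the high-frequency sum converges.)
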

Therefore, $T_m$ satisfies conditions (H1) and (H2). Then, as a consequence of Theorem \ref{theoremC}, we obtain the following result.
\begin{theorem}\label{thm1-App}
Let $1<s\leq 2$ and $m\in M(s,l)$  with $n/s<l<n$. For any $n/l<p_0<\vc$, the following hold true:
\begin{enumerate}[(a)]
\item For $p_0<p<\vc$ and $w\in A_{p/p_0}$, we have
$$
\|T_mf\|_{L^p(w)}\leq C_{T_m,p,p_0}[w]^{\max\{1,\f{1}{p-p_0}\}}_{A_{p/p_0}}\|f\|_{L^p(w)}.
$$

\item For $1<p<p_0'$ and $w\in A_{p}\cap RH_{(p'_0/p)'}$, we have
$$
\|T_mf\|_{L^p(w)}\leq C_{T_m,p,p_0}[w]_{RH_{(p'_0/p)'}}^{\max\{p'-1,\f{p'-1}{p'-p_0}\}}  [w]_{A_p}^{\max\{p'-1,\f{p'-1}{p'-p_0}\}}\|f\|_{L^p(w)}.
$$
\end{enumerate}
\end{theorem}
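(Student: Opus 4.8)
The plan is to read off part (a) from Theorem~\ref{theoremC} specialized to $m=1$, and then to obtain part (b) from part (a) by weighted duality plus a short computation with weight constants.

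\emph{Part (a).} First I would check that $T_m$ fits the abstract framework. Since $n/s<l<n$ forces $n/l>1$, the hypothesis $p_0>n/l$ gives $p_0>1$, and Theorem~\ref{thmTm} shows that $T_m$ is bounded on $L^{p_0}$; in particular it maps $L^{p_0}\to L^{p_0,\vc}$, which is (H1) for $m=1$. For (H2) I would take $\delta=n/p_0+\epsilon$, with $\epsilon>0$ the exponent furnished by Lemma~\ref{kernelestimates 2}: the kernel bound there, after absorbing the harmless factor $|x-\bar x|^{\delta-n/p_0}\lesssim r_Q^{\delta-n/p_0}$ as in the remark following that lemma, is exactly \eqref{cond2} with $m=1$. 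With (H1) and (H2) in hand, Theorem~\ref{theoremC} applies; since $m=1$ we have $\nu_{\vec w}=w$ and $A_{\vec P/p_0}=A_{p/p_0}$, and the exponent there is $\max\big(1,(p/p_0)'/p\big)=\max\{1,1/(p-p_0)\}$, which is precisely (a).

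\emph{Part (b).} Here I would argue by duality. Fix $1<p<p_0'$ (so $p'>p_0$) and $w\in A_p\cap RH_{(p_0'/p)'}$, and put $\sigma=w^{1-p'}$. The adjoint $T_m^{*}$ is again a Fourier multiplier whose symbol lies in $M(s,l)$ with the same constants, so part (a) applies to $T_m^{*}$ at exponent $p'$: it is bounded on $L^{p'}(\sigma)$ provided $\sigma\in A_{p'/p_0}$, with norm $\lesssim[\sigma]_{A_{p'/p_0}}^{\max\{1,1/(p'-p_0)\}}$. The standard weighted duality $\|T_m f\|_{L^p(w)}=\sup_{\|g\|_{L^{p'}(\sigma)}\le1}\big|\int f\,T_m^{*}g\big|\le\|f\|_{L^p(w)}\,\|T_m^{*}\|_{L^{p'}(\sigma)\to L^{p'}(\sigma)}$ then reduces everything to estimating $[\sigma]_{A_{p'/p_0}}$.

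The last ingredient is the weight estimate $[\sigma]_{A_{p'/p_0}}\le\big([w]_{A_p}[w]_{RH_{(p_0'/p)'}}\big)^{p'-1}$. Writing $s=(p_0'/p)'$, a bookkeeping of conjugate exponents shows that $A_{p'/p_0}$-duality rewrites $\sigma\in A_{p'/p_0}$ as $w^s\in A_{s(p-1)+1}$ and gives $[\sigma]_{A_{p'/p_0}}=[w^s]_{A_{s(p-1)+1}}^{(p'-p_0)/p_0}$; moreover the $A_{s(p-1)+1}$-dual exponent of $w^s$ is exactly $w^{1-p'}=\sigma$, so the reverse Hölder inequality $\langle w^s\rangle_Q\le[w]_{RH_s}^s\langle w\rangle_Q^s$ yields in one line $[w^s]_{A_{s(p-1)+1}}\le[w]_{RH_s}^s[w]_{A_p}^s$. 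Since $s(p'-p_0)/p_0=p'-1$, this produces the claimed bound on $[\sigma]_{A_{p'/p_0}}$, and combining it with the previous paragraph together with the identity $(p'-1)\max\{1,1/(p'-p_0)\}=\max\{p'-1,(p'-1)/(p'-p_0)\}$ gives exactly (b). I do not expect a genuine obstacle here: the substantive content is all imported (the $L^{p_0}$-bound from \cite{KW} behind (H1), the kernel regularity of Lemma~\ref{kernelestimates 2} behind (H2), and Theorem~\ref{theoremC} itself), while the steps I add — identifying the adjoint multiplier, weighted duality, and the conjugate-exponent arithmetic — are routine; if any single point deserves scrutiny it is the absorption of the $|x-\bar x|$ factor when verifying (H2).
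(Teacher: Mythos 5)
Your proposal is correct and follows essentially the same path as the paper: part (a) is read off from Theorem~\ref{theoremC} with $m=1$ after verifying (H1) via Theorem~\ref{thmTm} and (H2) via Lemma~\ref{kernelestimates 2}, and part (b) is obtained by weighted duality together with the weight-constant estimate $[\sigma]_{A_{p'/p_0}}\le\big([w]_{A_p}[w]_{RH_{(p_0'/p)'}}\big)^{p'-1}$, which is exactly the paper's inequality \eqref{eq1-application}. The only cosmetic difference is in the conjugate-exponent bookkeeping for that inequality: you route it through $A_q$-duality and the auxiliary weight $w^s\in A_{s(p-1)+1}$, while the paper computes $[\sigma]_{A_{p'/p_0}}$ directly via the identity $(1-p')\big[1-(p'/p_0)'\big]=(p_0'/p)'$ and then applies reverse H\"older and the $A_p$ condition; both are the same calculation. (One small inaccuracy in your write-up: there is no ``remark following that lemma,'' and the absorption $|x-\bar x|^{\delta-n/p_0}\lesssim r_Q^{\delta-n/p_0}$ actually goes in the direction that makes Lemma~\ref{kernelestimates 2} \emph{weaker} than the literal statement of (H2), not stronger; what matters, as in the paper, is that the Lemma's bound suffices for Proposition~\ref{prop4.2}, so the argument goes through regardless.)
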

\begin{proof}
\noindent (a) From Theorem \ref{thmTm} and Lemma \ref{kernelestimates 2}, $T_m$ satisfies (H1) and (H2) for $p_0$. As a consequence of Theorem \ref{theoremC} we get that
$$
\|T_mf\|_{L^p(w)}\leq C_{T_m,p,p_0}[w]^{\max\{1,\f{(p/p_0)'}{p}\}}_{A_{p/p_0}}\|f\|_{L^p(w)}=C_{T_m,p,p_0}[w]^{\max\{1,\f{1}{p-p_0}\}}_{A_{p/p_0}}\|f\|_{L^p(w)}.
$$

\noindent (b) For $w\in A_{p}\cap RH_{(p'_0/p)'}$, we now claim that
\begin{equation}\label{eq1-application}
[\sigma]_{A_{p'/p_0}}\leq  [w]_{RH_{(p'_0/p)'}}^{p'-1}[w]_{A_p}^{p'-1}:=[w]_{RH_{(p'_0/p)'}}^{\f{1}{p-1}}[w]_{A_p}^{\f{1}{p-1}},
\end{equation}
where $\sigma =w^{1-p'}$.

Once \eqref{eq1-application} is proved, the statement (b) follows immediately by duality.

To prove \eqref{eq1-application}, we write
$$
[\sigma]_{A_{p'/p_0}}=\sup_{Q}\Big(\fint_Q \sigma\Big)\Big(\fint_Q \sigma^{1-(p'/p_0)'}\Big)^{p'/p_0-1}.
$$
This along with the fact that
$$
(1-p')\Big[1-\Big(\f{p'}{p_0}\Big)'\Big]=\f{p_0}{p-p_0(p-1)}=\Big(\f{p'_0}{p}\Big)'
$$
implies that
$$
[\sigma]_{A_{p'/p_0}}=\sup_{Q}\Big(\fint_Q w^{1-p'}\Big)\Big(\fint_Q w^{(p_0'/p)'}\Big)^{p'/p_0-1}.
$$
Using the facts that $w\in RH_{(p_0'/p)'}$ and $\displaystyle \Big[\f{p'}{p_0}-1\Big]\Big(\f{p_0'}{p}\Big)'=\f{1}{p-1}$, we obtain that
$$
\begin{aligned}
[\sigma]_{A_{p'/p_0}}&\leq [w]_{RH_{(p_0'/p)'}}^{\f{1}{p-1}} \sup_{Q}\Big(\fint_Q w^{1-p'}\Big)\Big(\fint_Q w\Big)^{\f{1}{p-1}}\\
&\leq [w]_{RH_{(p_0'/p)'}}^{\f{1}{p-1}}[w]_{A_p}^{\f{1}{p-1}}.
\end{aligned}
$$
This proves \eqref{eq1-application}.
\end{proof}

\begin{remark}
Note that it was proved in \cite[Theorem 1]{KW} that under the same assumptions as Theorem \ref{thm1-App}, $T_m$ is bounded on $L^p(w)$ for $n/l<p<\vc$ and $w\in A_{pl/n}$ and hence by duality $T_m$ is bounded on $L^p(w)$ for $1<p<(n/l)'$ and $w\in A_{p}\cap RH_{((n/l)'/p)'}$. Hence, it is reasonable to expect that the weighted bounds in Theorem \ref{thm1-App} still hold for $p_0=n/l$. It is not clear whether or not this conjecture is true and we leave it as an open problem.
\end{remark}
\subsection{Riesz transforms related to Schr\"odinger operators}
Let $L=-\Delta +V$ be  the Schr\"odinger operators on $\mathbb{R}^n$ with $n\geq 3$ where the potential $V$ is in the reverse H\"older class $RH_{q}$ for some $q>n/2$. Note that in the case $V\in RH_q, q\geq n$ the Riesz transforms $\nabla L^{-1/2}$ and $L^{-1/2}\nabla$ turn out to be Calder\'on-Zygmund operators. See for examples \cite{Shen}. This is a reason why we restrict ourself to consider the case $V\in RH_q$ with $n/2<q<n$. We now recall the following result concerning the boundedness of the Riesz transforms $\nabla L^{-1/2}$ and $L^{-1/2}\nabla$ in \cite{Shen}.
\begin{theorem}\label{thmRiesz}
Let $L=-\Delta +V$ be  the Schr\"odinger operators on $\mathbb{R}^n$ with $n\geq 3$. Assume that $V\in RH_q, n/2<q<n$. Let $p_0=\f{qn}{n-q}$. Then we have
\begin{enumerate}[(a)]
\item $L^{-1/2}\nabla$ is bounded on $L^p$ for $p'_0\leq p<\vc$.
\item $\nabla L^{-1/2}$ is bounded on $L^p$ for $1<p\leq p_0$.
\end{enumerate}
\end{theorem}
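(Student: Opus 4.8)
The final statement to prove is Theorem~\ref{thmRiesz}, which is attributed to Shen and concerns $L^p$-boundedness of the Riesz transforms $L^{-1/2}\nabla$ and $\nabla L^{-1/2}$ associated with a Schr\"odinger operator $L = -\Delta + V$ where $V \in RH_q$ with $n/2 < q < n$. Since this is a citation to \cite{Shen}, the ``proof'' I would give is really an indication of the scheme of Shen's argument rather than a new derivation; my plan is to organize it around the scaling properties of the auxiliary function and a subordination/duality pair.

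The plan is to proceed as follows. First I would introduce the critical radius function $\rho(x,V)$ of Shen, defined via $\rho(x) = \sup\{r > 0 : r^{2-n}\int_{B(x,r)} V \leq 1\}$, and record its slow-variation property: there exist $C, k_0$ so that $C^{-1}\rho(x)(1 + |x-y|/\rho(x))^{-k_0} \leq \rho(y) \leq C\rho(x)(1 + |x-y|/\rho(x))^{k_0/(k_0+1)}$. Second, for part (b), I would establish the key pointwise kernel estimate for $\nabla L^{-1/2}$, namely that its kernel $K(x,y)$ satisfies, for $V \in RH_q$ with $n/2 < q < n$,
$$
|K(x,y)| \leq \frac{C_N}{|x-y|^{n-1}}\Big(1 + \frac{|x-y|}{\rho(x)}\Big)^{-N}\Big(\fint_{B(x,|x-y|)} V(z)^{q}\,dz\Big)^{1/q}|x-y|,
$$
more precisely that the ``bad'' part of the kernel is controlled, after using the reverse H\"older inequality for $V$, by a fractional-integral-type expression that lands $L^p \to L^p$ exactly for $p \leq p_0 = qn/(n-q)$; this is where the heat-kernel / resolvent bounds for $L$ and the Fefferman–Phong inequality for $RH_q$ potentials enter. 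The range restriction $p \le p_0$ comes from the Sobolev-type embedding $\|V^{1/2} u\|_{L^p} \lesssim \|\nabla u\|_{L^p}$ being available precisely in that range when $V \in RH_q$.

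Third, for part (a), I would obtain the dual statement: since $(L^{-1/2}\nabla)^* = \nabla L^{-1/2}$ (as $L$ is self-adjoint), boundedness of $\nabla L^{-1/2}$ on $L^p$ for $1 < p \le p_0$ is equivalent by duality to boundedness of $L^{-1/2}\nabla$ on $L^{p'}$ for $p_0' \le p' < \infty$, which is exactly assertion (a). So (a) and (b) are two faces of the same estimate, and it suffices to prove one of them directly. I would prove (b) directly via the kernel bound above combined with a good-$\lambda$ or sparse-domination argument localized at scale $\rho$, then deduce (a) by duality.

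The main obstacle is the kernel estimate for $\nabla L^{-1/2}$ in the regime $V \in RH_q$ with $q < n$: unlike the case $q \ge n$, the Riesz transform is \emph{not} Calder\'on–Zygmund, so one cannot rely on standard CZ theory, and the gradient of the fundamental solution picks up a genuine singularity of order $|x-y|^{1-n}$ times a factor involving $V$ that is only in $L^q_{\mathrm{loc}}$. Controlling this requires the precise interplay between the decay in $|x-y|/\rho(x)$ (coming from exponential decay of the Agmon-type kernel of $e^{-tL}$) and the local integrability of $V$ furnished by the reverse H\"older condition; getting the endpoint $p_0 = qn/(n-q)$ exactly right is the delicate point and is the crux of Shen's work. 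Since we only cite the theorem, in the paper I would simply state it with the reference to \cite{Shen} and move on to applying Theorem~\ref{theoremC} via the verification that $L^{-1/2}\nabla$ and $\nabla L^{-1/2}$ satisfy (H1) and (H2) with the stated $p_0$.
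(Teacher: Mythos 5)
Your proposal is correct and matches the paper's treatment: Theorem~\ref{thmRiesz} is simply quoted from Shen \cite{Shen} without proof, and you correctly recognize this, give an accurate outline of the ingredients of Shen's argument (the auxiliary/critical radius function and its slow-variation estimates, the pointwise bounds on the fundamental solution and its gradient with the polynomial decay factor in $|x-y|/\rho(x)$, the Fefferman--Phong inequality and the Sobolev-type bound $\|V^{1/2}u\|_{L^p}\lesssim\|\nabla u\|_{L^p}$ which force the endpoint $p_0=qn/(n-q)$), and the duality reduction between (a) and (b). Nothing further is needed here, as the paper only cites the result before applying Theorem~\ref{theoremC}.
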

We now apply Theorem \ref{theoremC} to get the weighted bounds for these operators.
\begin{theorem}
Let $L=-\Delta +V$ be  the Schr\"odinger operators on $\mathbb{R}^n$ with $n\geq 3$. Assume that $V\in RH_q, n/2<q<n$. Let $p_0=\f{qn}{n-q}$. Then we have
\begin{enumerate}[(a)]
\item For $p'_0<p<\vc$ and $w\in A_{p/p'_0}$, we have
$$
\|L^{-1/2}\nabla f\|_{L^p(w)}\leq C_{L,p,q}[w]^{\max\{1,\f{1}{p-p_0'}\}}_{A_{p/p'_0}}\|f\|_{L^p(w)}.
$$

\item For $1<p<p_0$ and $w\in A_{p}\cap RH_{(p_0/p)'}$, we have
$$
\|\nabla L^{-1/2}f\|_{L^p(w)}\leq C_{L,p,q}[w]_{RH_{(p_0/p)'}}^{\max\{p'-1,\f{p'-1}{p'-p_0'}\}}  [w]_{A_p}^{\max\{p'-1,\f{p'-1}{p'-p_0'}\}}\|f\|_{L^p(w)}.
$$
\end{enumerate}
\end{theorem}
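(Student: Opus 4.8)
The plan is to repeat, almost verbatim, the scheme of Theorem~\ref{thm1-App}: reduce to Theorem~\ref{theoremC} in the linear case $m=1$ with ``integrability exponent'' equal to $p_0'$ (not $p_0$), and handle part (b) by duality together with a weight identity in the spirit of \eqref{eq1-application}.

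First I would verify (H1) and (H2) for $L^{-1/2}\nabla$ with exponent $p_0'$. Condition (H1) is immediate from Theorem~\ref{thmRiesz}(a): $L^{-1/2}\nabla$ is bounded on $L^{p_0'}$, hence maps $L^{p_0'}$ into $L^{p_0',\infty}=L^{p_0'/1,\infty}$. For (H2) I would use the kernel estimates for $L^{-1/2}\nabla$ from \cite{Shen}: writing $L^{-1/2}\nabla$ through the subordination formula $L^{-1/2}=c\int_0^\infty e^{-tL}\,t^{-1/2}\,dt$ and invoking the $L^{p_0}$-type gradient heat-kernel bounds available for $V\in RH_q$ with $n/2<q<n$ (where $p_0=qn/(n-q)$), together with the decay of the auxiliary (critical radius) function associated with $V$, one obtains, for every ball $B$ of radius $r_B$, all $x,\bar x\in B$ and all $k\ge 2$,
$$
\Bigl(\int_{S_k(B)}|K(x,y)-K(\bar x,y)|^{p_0}\,dy\Bigr)^{1/p_0}\le C\,\frac{2^{-k\varepsilon}}{(2^k r_B)^{n/p_0'}}
$$
for some $\varepsilon>0$. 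Since $(p_0')'=p_0$, this is exactly the analogue of Lemma~\ref{kernelestimates 2} with $p_0$ and $p_0'$ interchanged, and it yields (H2) with any $\delta\in\bigl(n/p_0',\,n/p_0'+\varepsilon\bigr]$.

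With (H1) and (H2) available, part (a) is a direct application of Theorem~\ref{theoremC} ($m=1$, exponent $p_0'$): for $p_0'<p<\infty$ and $w\in A_{p/p_0'}$ one gets $\|L^{-1/2}\nabla f\|_{L^p(w)}\lesssim [w]_{A_{p/p_0'}}^{\max\{1,(p/p_0')'/p\}}\|f\|_{L^p(w)}$, and $(p/p_0')'/p=1/(p-p_0')$ gives the stated exponent. For part (b) I would argue by duality: $\nabla L^{-1/2}$ and $L^{-1/2}(-\mathrm{div})$ are formal transposes, so the desired bound on $L^p(w)$ with $1<p<p_0$ is equivalent to bounding $L^{-1/2}(-\mathrm{div})$ on $L^{p'}(\sigma)$, $\sigma=w^{1-p'}$, $p'>p_0'$. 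This operator again satisfies (H1) (it is bounded on $L^{p_0'}$, being the transpose of $\nabla L^{-1/2}$ on $L^{p_0}$ from Theorem~\ref{thmRiesz}(b)) and (H2) with exponent $p_0'$ (the same estimate as above, for the transposed kernel). Hence Theorem~\ref{theoremC}, applied to $L^{-1/2}(-\mathrm{div})$ at exponent $p'$, gives $\|L^{-1/2}(-\mathrm{div})\vec g\|_{L^{p'}(\sigma)}\lesssim [\sigma]_{A_{p'/p_0'}}^{\max\{1,1/(p'-p_0')\}}\|\vec g\|_{L^{p'}(\sigma)}$ whenever $\sigma\in A_{p'/p_0'}$. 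It then remains to check, exactly as in \eqref{eq1-application} with $p_0$ replaced by $p_0'$, that if $w\in A_p\cap RH_{(p_0/p)'}$ then $\sigma=w^{1-p'}\in A_{p'/p_0'}$ with $[\sigma]_{A_{p'/p_0'}}\le [w]_{RH_{(p_0/p)'}}^{p'-1}[w]_{A_p}^{p'-1}$; this uses the identities $(1-p')\bigl[1-(p'/p_0')'\bigr]=(p_0/p)'$ and $\bigl[(p'/p_0')-1\bigr](p_0/p)'=1/(p-1)$ together with the reverse H\"older inequality for $w$. Substituting and dualizing back produces the exponent $\max\{p'-1,(p'-1)/(p'-p_0')\}$ on each of $[w]_{RH_{(p_0/p)'}}$ and $[w]_{A_p}$, which is the claimed estimate.

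The main obstacle is establishing the kernel regularity (H2): because $V$ lies only in $RH_q$ with $q<n$, the gradient heat-kernel bounds are of $L^{p_0}$ (averaged) type rather than pointwise, so the verification has to be carried out through the critical radius function and Shen's resolvent/heat-kernel estimates, not by a direct Calder\'on--Zygmund kernel computation. Everything else is routine bookkeeping around Theorems~\ref{theoremC} and \ref{thmRiesz} and the $A_p$--$RH$ duality identity used in \eqref{eq1-application}.
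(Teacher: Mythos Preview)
Your proposal follows essentially the same route as the paper: verify (H1)--(H2) for $L^{-1/2}\nabla$ with integrability exponent $p_0'$, apply Theorem~\ref{theoremC} to obtain (a), and deduce (b) by duality together with the weight identity \eqref{eq1-application}. The structure and the computations of the exponents are correct.

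Two points where the paper is more economical. First, for the kernel regularity you propose to derive (H2) from Shen's heat-kernel/resolvent estimates via the subordination formula; the paper instead simply quotes the needed $L^{p_0}$-oscillation bound
\[
\Bigl(\int_{S_k(B)}|K(x,y)-K(\bar x,y)|^{p_0}\,dy\Bigr)^{1/p_0}\le C\,\frac{2^{-k\varepsilon}}{(2^k r_B)^{n/p_0'}}
\]
from the proof of \cite[Theorem~1.6(iii)]{GLP}. So the ``main obstacle'' you flag is already available as a black box in the literature and need not be redone. Second, for (b) you re-verify (H1)--(H2) for the transposed operator $L^{-1/2}(-\mathrm{div})$ and then invoke Theorem~\ref{theoremC} again; the paper avoids this by dualizing directly from part (a): since the components $L^{-1/2}\partial_j$ are (up to sign) the adjoints of $\partial_j L^{-1/2}$, the $L^{p'}(\sigma)$ bound for the adjoint is already contained in (a), and only the weight inequality $[\sigma]_{A_{p'/p_0'}}\le [w]_{RH_{(p_0/p)'}}^{p'-1}[w]_{A_p}^{p'-1}$ remains to be checked, exactly as you indicate.
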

\begin{proof}
(a) Let $K(x,y)$ be an associated kernel of the Riesz transform $L^{-1/2}\nabla$, according to the proof of Theorem 1.6 (iii) in \cite{GLP}
 there exists $\epsilon>0$ such that for any ball $B$, and $x, \overline{x}\in B$, there holds
$$
\begin{aligned}
\Big(\int_{S_k(B)}|K(x,y)-K(\overline{x},y)|^{p_0}dy\Big)^{1/p_0}\leq C \f{2^{-k\epsilon}}{(2^kr_B)^{n/p'_0}},
\end{aligned}
$$
for all $k\geq 2$. Hence, the statement (a) follows immediately from Theorem \ref{theoremC}.

\noindent (b) Part (b) follows from (a) and duality argument used in Theorem \ref{thm1-App}.
\end{proof}

\begin{remark}
It is worth noting that our approach is still applicable to obtain the weighted bounds for other Riesz transforms such that $V^{1/2}L^{-1/2}, L^{-1/2}V^{1/2}$, $VL^{-1}$ and $L^{-1}V$.
\end{remark}
\subsection{Multilinear Fourier multiplier}

Another application of Theorem \ref{theoremC} is to obtain the weighted bounds for multilinear Fourier multiplier operators. 


For the sake of simplicity, we only consider the bilinear case. Let $m\in C^{s}(\mathbb{R}^{2n}\backslash\{0\})$, for some integer $s$, satisfying the following condition:
\begin{equation}\label{Homandercondition-multipliers}
|\p^\alpha_{\xi}\p^\beta_{\eta}m(\xi,\eta)|\leq C_{\alpha,\beta}(|\xi|+|\eta|)^{-(|\alpha|+|\beta|)}
\end{equation}
for all $|\alpha|+|\beta|\leq s$ and $(\xi,\eta)\in \mathbb{R}^{2n}\backslash\{0\}$. The bilinear Fourier multiplier operator $T_m$ is defined by
$$
T_m(f,g)(x)=\f{1}{(2\pi)^{2n}}\int_{\mathbb{R}^n}\int_{\mathbb{R}^n}e^{ix\cdot (\xi+\eta)}m(\xi,\eta)\hat{f}(\xi)\hat{g}(\eta)d\xi d\eta
$$
for all $f, g\in \mathcal{S}(\mathbb{R}^n)$. \\
The associated kernel $K(x, y_1, y_2)$ to $T_m$ is given by
\begin{equation}\label{associatedkenerl}
K(x,y_1,y_2)= \check{m} (x-y_1,x-y_2)
\end{equation}
where $\check{m}$ is the inverse Fourier transform of $m$. It is proved in \cite{BD} that the associated kernel $K$ satisfies (H2).
\begin{prop}\label{kernelcondtionH2}
For any $p> 2n/s$, we have,
\begin{equation}\label{eq1-kernelH2}
\Big(\int_{S_j(Q)}\int_{S_k(Q)}|K(x,y_1,y_2)-K(\overline{x},y_1,y_2)|^{p'}dy_1dy_2\Big)^{1/p'}\leq C\f{|x-\overline{x}|^{s-2n/{p}}}{|Q|^{s/n}} 2^{-s \max\{j,k\}}
\end{equation}
for all balls $Q$, all $x, \overline{x}\in \f{1}{2}Q$ and $(j,k)\neq (0,0)$.
\end{prop}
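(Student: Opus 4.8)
The plan is to reduce the estimate~\eqref{eq1-kernelH2} to pointwise and $L^{p'}$ bounds on the kernel $K(x,y_1,y_2)=\check m(x-y_1,x-y_2)$ and its increments, using the decay of $\check m$ coming from the differential inequalities~\eqref{Homandercondition-multipliers}. First I would recall the standard fact: a symbol $m\in C^{s}(\mathbb{R}^{2n}\setminus\{0\})$ satisfying~\eqref{Homandercondition-multipliers} is a sum of Littlewood--Paley pieces $m=\sum_{\nu\in\Z}m_\nu$, where $m_\nu(\xi,\eta)=m(\xi,\eta)\Phi(2^{-\nu}(\xi,\eta))$ is supported in $|(\xi,\eta)|\sim 2^\nu$, and the inverse transforms $\check m_\nu$ satisfy the Bernstein-type bounds
$$
|\check m_\nu(u,v)|\lesssim \frac{2^{2n\nu}}{(1+2^\nu|(u,v)|)^{s}},\qquad |\nabla\check m_\nu(u,v)|\lesssim \frac{2^{(2n+1)\nu}}{(1+2^\nu|(u,v)|)^{s}},
$$
obtained by integrating by parts $s$ times against the scaled symbol. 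Summing in $\nu$ gives the kernel bounds $|K(x,y_1,y_2)|\lesssim (|x-y_1|+|x-y_2|)^{-2n}$ away from the diagonal, together with the smoothness estimate $|K(x,y_1,y_2)-K(\overline x,y_1,y_2)|\lesssim |x-\overline x|(|x-y_1|+|x-y_2|)^{-2n-1}$ when $|x-\overline x|$ is small compared to $|x-y_1|+|x-y_2|$; this last restriction is exactly what holds when $x,\overline x\in\tfrac12 Q$ and $(y_1,y_2)$ lies in an annulus $S_j(Q)\times S_k(Q)$ with $(j,k)\neq(0,0)$, since then $|x-y_1|+|x-y_2|\gtrsim 2^{\max\{j,k\}}r_Q$ while $|x-\overline x|\lesssim r_Q$.

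Next I would run the $L^{p'}$ computation. Fix $x,\overline x\in\tfrac12Q$ and write $\rho=r_Q$, $j_0=\max\{j,k\}$. On $S_j(Q)\times S_k(Q)$ we have $|x-y_1|+|x-y_2|\sim 2^{j_0}\rho$ and the product set has measure $\lesssim (2^j\rho)^n(2^k\rho)^n\lesssim 2^{2nj_0}\rho^{2n}$. Using the smoothness bound above, the $p'$-integral is at most
$$
\Big(\int_{S_j(Q)}\!\int_{S_k(Q)}\Big(\frac{|x-\overline x|}{(2^{j_0}\rho)^{2n+1}}\Big)^{p'}dy_1\,dy_2\Big)^{1/p'}
\lesssim \frac{|x-\overline x|}{(2^{j_0}\rho)^{2n+1}}\,\big(2^{2nj_0}\rho^{2n}\big)^{1/p'}.
$$
This equals $|x-\overline x|\,(2^{j_0}\rho)^{-2n-1+2n/p'}$; since $-2n-1+2n/p'=-1-2n/p$, it rewrites as $|x-\overline x|\,(2^{j_0}\rho)^{-1-2n/p}$. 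Writing one factor $|x-\overline x|^{s-2n/p}$ as $|x-\overline x|\cdot|x-\overline x|^{s-1-2n/p}$ and using $|x-\overline x|\le\rho$, one checks this is bounded by $C\,\rho^{-s}\,|x-\overline x|^{s-2n/p}\,2^{-j_0(1+2n/p)}$, which is stronger than the claimed $C\,|x-\overline x|^{s-2n/p}|Q|^{-s/n}2^{-sj_0}$ provided the annulus exponent and the $\rho$-power match. To get the exact exponent $2^{-sj_0}$ and the homogeneity $|Q|^{-s/n}=\rho^{-s}$ in~\eqref{eq1-kernelH2}, the cleanest route is to estimate the increment of each Littlewood--Paley piece separately: for each $\nu$,
$$
|\check m_\nu(x-y_1,x-y_2)-\check m_\nu(\overline x-y_1,\overline x-y_2)|\lesssim |x-\overline x|\,\frac{2^{(2n+1)\nu}}{(1+2^\nu 2^{j_0}\rho)^{N}}
$$
for any $N\le s$, take $L^{p'}$ over the annulus to get a factor $(2^{j_0}\rho)^{2n/p'}2^{(2n+1)\nu}(1+2^\nu 2^{j_0}\rho)^{-N}$, and sum the geometric-type series in $\nu$ (splitting at $2^\nu\sim(2^{j_0}\rho)^{-1}$), which produces exactly $|x-\overline x|(2^{j_0}\rho)^{-2n-1+2n/p'}=|x-\overline x|(2^{j_0}\rho)^{-1-2n/p}$; then bound $|x-\overline x|\le|x-\overline x|^{s-2n/p}\rho^{1-s+2n/p}$ is false unless $s-2n/p\le 1$, so instead one keeps $|x-\overline x|^{s-2n/p}$ by using the interpolated smoothness of order $s-2n/p$ in the Littlewood--Paley estimate — i.e. trade $|x-\overline x|$ for $|x-\overline x|^{\theta}2^{\nu\theta}$ with $\theta=s-2n/p\in(0,1]$ — and then the summation yields precisely $|x-\overline x|^{s-2n/p}(2^{j_0}\rho)^{-s}=|x-\overline x|^{s-2n/p}|Q|^{-s/n}2^{-sj_0}$, as required.

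The main obstacle I anticipate is bookkeeping the exponents so that the fractional smoothness order $s-2n/p$ appears on $|x-\overline x|$ \emph{and} simultaneously the full power $2^{-sj_0}$ appears on the annulus, rather than the weaker decay that a crude Lipschitz estimate gives; this forces the use of a fractional-order (Hölder-$\theta$) increment bound on the Littlewood--Paley pieces and a careful splitting of the $\nu$-sum. The positivity of the exponents involved (needed so that $|x-\overline x|\le r_Q$ can be used, and so that the geometric series converges, which is where the hypothesis $p>2n/s$ enters to guarantee $s-2n/p>0$) should be tracked explicitly. Everything else — the Littlewood--Paley decomposition of $m$, the Bernstein bounds by repeated integration by parts, and the measure estimate of $S_j(Q)\times S_k(Q)$ — is routine, and indeed this is the argument carried out in \cite{BD}, so in the paper it suffices to cite that computation and indicate the exponent arithmetic above.
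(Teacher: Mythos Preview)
The paper does not give a proof of this proposition; it simply cites \cite{BD} and states the result. Your reconstruction via a Littlewood--Paley decomposition of $m$ and frequency-localized kernel bounds is indeed the strategy used there, and you correctly identify the central bookkeeping issue: arranging that the fractional power $|x-\overline x|^{s-2n/p}$ and the full annular decay $2^{-sj_0}$ appear simultaneously.

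Two technical points in your sketch deserve correction. First, the claim $\theta=s-2n/p\in(0,1]$ is not generally true; for large $p$ this exponent exceeds $1$, so the ``interpolated H\"older'' device $\min(1,2^\nu|x-\overline x|)\le(2^\nu|x-\overline x|)^{\theta}$ is not available for that choice of $\theta$. Second, and more seriously, the purely pointwise route --- bounding the increment by $|x-\overline x|^{\theta}2^{(2n+\theta)\nu}(1+2^{\nu}R)^{-s}$ on the annulus (with $R=2^{j_0}r_Q$) and then multiplying by $R^{2n/p'}$ --- does not sum over $\nu$ at high frequencies: since $s\le 2n$ here, the exponent $2n+\theta-s$ is nonnegative and the tail $\sum_{2^\nu>1/R}2^{(2n+\theta-s)\nu}$ diverges. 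The argument in \cite{BD} avoids this by estimating $\|\check m_\nu\chi_{\{|\cdot|\gtrsim R\}}\|_{L^{p'}}$ directly through a weighted/Sobolev bound (after rescaling, one controls $\||w|^{s}\check M_\nu\|_{L^{p'}}$ via Hausdorff--Young and the uniform $W^{s}$ control on $M_\nu=m(2^\nu\cdot)\Phi$), which is what produces the clean factor $(2^{j_0}r_Q)^{-s}$. Since you already intend to cite \cite{BD} for the computation, this is harmless for the paper, but be aware that the pointwise-then-integrate sketch as written does not close on its own.
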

It was shown in \cite{CM2} that if (\ref{Homandercondition-multipliers}) holds for $s>4n$ then $T_m$ maps from $L^{p_1}\times L^{p_2}$ into $L^p$ for all $1<p_1, p_2, p<\vc$ so that $1/p_1+1/p_2=1/p$. Then, it was proved in \cite{GT1} that $T_m$ maps boundedly from $L^{p_1}\times L^{p_2}$ into $L^p$ for all $1<p_1, p_2<\vc$ so that $1/p_1+1/p_2=1/p$ provided that (\ref{Homandercondition-multipliers}) holds for $s\geq 2n+1$. However, in the sense of the linear case, the number of derivatives $s\geq 2n+1$  is not optimum and it is natural to expect that we only need $s\geq n+1$. The first positive answer is due to Tomita \cite{T2} who proved that if (\ref{Homandercondition-multipliers}) holds for $s\geq n+1$, then $T_m$ maps from $L^{p_1}\times L^{p_2}$ into $L^p$ for all $2\leq p_1, p_2, p<\vc$ such that $1/p_1+1/p_2=1/p$ and then by using the multilinear interpolation and duality arguments, he obtained that $T_m$ maps from $L^{p_1}\times L^{p_2}$ into $L^p$ for all $1<p_1, p_2, p<\vc
 $ such that $1/p_1+1/p_2=1/p$. This result was then improved in \cite{GS} for $p\leq 1$ by using the $L^r$-based Sobolev space, $1<r\leq 2$. A particular case of \cite[Theorem 1.1]{GS} is the following theorem.

\begin{theorem}\label{thmofGS}
Assume that $(\ref{Homandercondition-multipliers})$ holds for some $n+1\leq s\leq 2n$. Then for any $p_1, p_2$ and $p$ such that $\f{2n}{s}<p_1,p_2<\vc$ and $1/p_1+1/p_2=1/p$, the operator $T_m$ maps from $L^{p_1}\times L^{p_2}$ into $L^p$.
\end{theorem}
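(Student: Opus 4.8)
The plan is to follow the scheme of Tomita and Grafakos--Si: reduce to a uniform single-scale estimate by a Littlewood--Paley decomposition of the symbol, prove that estimate from the $L^r$-based Sobolev (Hörmander) information for a suitable $r$, and then recombine the scales by almost-orthogonality. Fix $r$ with $2n/s<r\le 2$; such $r$ exists precisely because $s\ge n+1$ gives $2n/s\le 2n/(n+1)<2$. Choose $\Phi\in C_c^\infty$ supported in $\{1/2\le|(\xi,\eta)|\le 2\}$ with $\sum_{k\in\mathbb Z}\Phi(2^{-k}(\xi,\eta))=1$ on $\mathbb R^{2n}\setminus\{0\}$, put $m_k(\xi,\eta)=m(\xi,\eta)\Phi(2^{-k}(\xi,\eta))$ so that $T_m=\sum_k T_{m_k}$, and set $\widetilde{m}_k(\xi,\eta)=m(2^k\xi,2^k\eta)\Phi(\xi,\eta)$. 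Differentiating $\widetilde{m}_k$ and using \eqref{Homandercondition-multipliers} shows that the $\widetilde{m}_k$ are supported in a fixed annulus and satisfy $\sup_k\|\widetilde{m}_k\|_{W^{s,r}(\mathbb R^{2n})}=:A<\infty$; this is the only place the hypothesis is used.

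Next I would prove the single-scale bound
\[
\|T_{m_k}(f,g)\|_{L^p}\le C\,A\,\|f\|_{L^{p_1}}\|g\|_{L^{p_2}},\qquad r<p_1,p_2<\infty,
\]
uniformly in $k$; by dilation invariance it suffices to treat $k=0$. The input is an off-diagonal size estimate for $K_0$, the inverse Fourier transform of $\widetilde{m}_0$: since $\widetilde{m}_0$ is compactly supported and lies in $W^{s,r}$ with $s>2n/r$, a Sobolev embedding gives, for $0\le t\le s$,
\[
\Big(\int_{\mathbb R^{2n}}(1+|(y_1,y_2)|)^{tr'}\,|K_0(y_1,y_2)|^{r'}\,dy_1\,dy_2\Big)^{1/r'}\lesssim\|\widetilde{m}_0\|_{W^{s,r}},
\]
which is the size counterpart of Proposition~\ref{kernelcondtionH2}. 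Pairing the trilinear form of $T_{m_0}$ with $h\in L^{p'}$, decomposing $f,g,h$ over a lattice of unit cubes, distributing the four exponents by Hölder, and summing the geometric series produced by the weight $(1+|\cdot|)^{-s}$ (the key point: placing $K_0$ in $L^{r'}$ rather than $L^\infty$ or $L^1$ is what lowers the required smoothness to $s>2n/r$ and opens the range to $p_i>r$) yields the estimate. The same computation gives the $\ell^2$-valued version used below and shows that $T_{m_k}(f,g)$ depends only on the frequency-$\lesssim 2^k$ parts of $f$ and $g$, at least one of which is restricted to frequencies $\sim 2^k$.

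To sum over $k$, first split $m$ in the Coifman--Meyer fashion into the regions $|\xi|\ll|\eta|$, $|\eta|\ll|\xi|$ and $|\xi|\sim|\eta|$. For the first two, the output $T_{m_k}(f,g)$ has frequency support in an annulus $\sim 2^k$; combining the single-scale estimate in its maximal-function form ($|T_{m_k}(S_kf,\Delta_k g)(x)|\lesssim A\,Mf(x)\,(M(|\Delta_k g|^r)(x))^{1/r}$) with a Littlewood--Paley square function, Fefferman--Stein, Hölder, and $p_i>r$ sums these contributions to $\|f\|_{L^{p_1}}\|g\|_{L^{p_2}}$. For the diagonal region $|\xi|\sim|\eta|\sim 2^k$ the output frequency $\xi+\eta$ only lies in a ball of radius $\lesssim 2^k$; one decomposes further according to its dyadic size $2^{k-j}$, $j\ge 0$, so that for each fixed $j$ the outputs again sit in annuli ($\sim 2^{k-j}$), Littlewood--Paley applies with a shift, and restricting $\xi+\eta$ to that thin slab costs a factor $\sim 2^{-j\varepsilon}$ extracted from the $s$ derivatives of $\widetilde{m}_k$; summing in $j$ closes the estimate. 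Finally, letting $p_i\downarrow r$ and $r\downarrow 2n/s$ gives the full range $2n/s<p_1,p_2<\infty$.

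The main obstacle is obtaining the sharp smoothness $s\ge n+1$ together with the full exponent range at once: this is what forces the $L^r$-Sobolev hypothesis with $r<2$ (so $p$ may fall below $1$) and the systematic replacement of $L^2$ by $L^{r'}$ in the kernel analysis, which in turn pins the admissible exponents to $(2n/s,\infty)$. Within this, the genuinely delicate step is the high-high interaction, where $T_{m_k}(f,g)$ is supported only in a ball rather than an annulus, so the naive Littlewood--Paley summation fails and one must perform the slab decomposition of $\xi+\eta$ above and quantify the gain $2^{-j\varepsilon}$ from the Sobolev norm.
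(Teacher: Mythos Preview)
The paper does not supply its own proof of this theorem; it is quoted as a particular case of \cite[Theorem~1.1]{GS} (Grafakos--Si), with the remark that the threshold $2n/s$ is implicit in that proof. Your outline is essentially a faithful sketch of the Tomita/Grafakos--Si argument itself: the Littlewood--Paley decomposition of $m$, the uniform $W^{s,r}$ control of the rescaled pieces $\widetilde m_k$ for some $2n/s<r\le 2$, the single-scale estimate via the $L^{r'}$ kernel bound, and the recombination with the three-region (low--high, high--low, diagonal) splitting are exactly the ingredients of those papers, and your identification of the high--high paraproduct as the delicate step is accurate.

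Two small points of precision. First, your closing sentence ``letting $p_i\downarrow r$ and $r\downarrow 2n/s$'' is misleading: no limiting is needed, since for any fixed $p_1,p_2>2n/s$ one simply chooses $r$ with $2n/s<r<\min(p_1,p_2,2)$ and runs the argument at that $r$. Second, in the diagonal region the mechanism producing the summable decay in $j$ is somewhat more structural than ``extracting $2^{-j\varepsilon}$ from the $s$ derivatives'': in Tomita's treatment it comes via duality with the adjoint multipliers, and in Grafakos--Si via careful bookkeeping of the $L^r$-Sobolev norms of the slab-restricted symbols; your phrasing glosses over which of these you intend, and this is the place where a full write-up would require the most care.
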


We remark that the number  $\f{2n}{s}$ in Theorem \ref{thmofGS} is contained implicitly in the proof of \cite[Theorem 1.1]{GS}.\\

For any $2n/s<p_0$, from Theorem \ref{thmofGS} and Proposition \ref{kernelcondtionH2}, $T_m$ satisfies (H1) and (H2) for $p_0$. Then applying the main Theorem \ref{theoremC} we obtain the following result.

\begin{theorem}\label{thmMTm}
Assume that $(\ref{Homandercondition-multipliers})$ holds for some $n+1\leq s\leq 2n$. Let $2n/s<p_0$ Then for any $p_1, p_2, p$ such that $p_0<p_1,p_2<\vc$,
  $1/p_1+1/p_2=1/p$, and $\vec{\omega}=(w_1,w_2)\in A_{\vec{P}/p_0}$ with $\vec{P}=(p_1,p_2)$. Then
$$
\|T_m(f_1, f_2)\|_{L^p(v_{\vec{\omega}})}\leq C [\vec{w}]_{A_{\vec{P}/p_0}}^{\max\left(1,\tfrac{(p_1/p_0)'}{p},\tfrac{(p_2/p_0)'}{p}\right)} \|f_1\|_{L^{p_1}(w_1)}\|f_2\|_{L^{p_2}(w_2)}.
$$
\end{theorem}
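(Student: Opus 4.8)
The plan is to deduce Theorem~\ref{thmMTm} directly from the general weighted bound in Theorem~\ref{theoremC}, by verifying that the bilinear multiplier $T_m$ fits the structural framework of the paper: that is, that $T_m$ satisfies hypotheses (H1) and (H2) with $m=2$ and with the given exponent $p_0$. Concretely there are three things to carry out, in this order: check (H1); check (H2) while pinning down the regularity parameter $\delta$; and then quote Theorem~\ref{theoremC}.

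For (H1): since $p_0>2n/s$, I would apply Theorem~\ref{thmofGS} with $p_1=p_2=p_0$ (so that $p=p_0/2$). This gives boundedness of $T_m$ from $L^{p_0}\times L^{p_0}$ into $L^{p_0/2}$, which embeds continuously into $L^{p_0/2,\infty}$, and that is exactly (H1) for $m=2$.

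For (H2): I would invoke Proposition~\ref{kernelcondtionH2} with the choice $p=p_0$, admissible precisely because $p_0>2n/s$. This yields, for every ball $Q$, every $x,\overline{x}\in\tfrac12 Q$ and every $(j,k)\neq(0,0)$,
$$\Big(\int_{S_j(Q)}\int_{S_k(Q)}|K(x,y_1,y_2)-K(\overline{x},y_1,y_2)|^{p_0'}\,dy_1\,dy_2\Big)^{1/p_0'}\le C\,\frac{|x-\overline{x}|^{\,s-2n/p_0}}{|Q|^{s/n}}\,2^{-s\max\{j,k\}}.$$
Matching this against the form of (H2) for $m=2$, whose right-hand side reads $C\,|x-\overline{x}|^{2(\delta-n/p_0)}|Q|^{-2\delta/n}2^{-2\delta j_0}$ with $j_0=\max\{j,k\}$, forces the identification $\delta=s/2$: indeed $2(\delta-n/p_0)=s-2n/p_0$, $2\delta/n=s/n$, and $2\delta j_0=s\max\{j,k\}$ all agree. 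The side condition $\delta>n/p_0$ required in (H2) then becomes $s/2>n/p_0$, equivalently $p_0>2n/s$ — which is exactly our standing hypothesis — so nothing is lost.

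Having checked (H1) and (H2), the last step is to apply Theorem~\ref{theoremC} with $m=2$, $\vec P=(p_1,p_2)$, and $\vec w=(w_1,w_2)\in A_{\vec P/p_0}$; its conclusion is precisely the asserted inequality, with exponent $\max\big(1,(p_1/p_0)'/p,(p_2/p_0)'/p\big)$. I do not expect a genuine obstacle: the proof is a verification, and the only slightly delicate point is the bookkeeping that reads off $\delta=s/2$ and records that the admissibility threshold $p_i>2n/s$ in Theorem~\ref{thmofGS} and the threshold $\delta>n/p_0$ in (H2) both collapse to $p_0>2n/s$, so that the hypothesis of Theorem~\ref{thmMTm} is exactly what the machinery consumes, with no room to spare.
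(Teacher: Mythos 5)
Your proposal is correct and follows exactly the route the paper takes: verify (H1) via Theorem~\ref{thmofGS} (with $p_1=p_2=p_0$) and (H2) via Proposition~\ref{kernelcondtionH2} (reading off $\delta=s/2$), then apply Theorem~\ref{theoremC} with $m=2$. The paper compresses this into a single sentence preceding the statement; your bookkeeping that $\delta=s/2$ and that the threshold $\delta>n/p_0$ collapses to $p_0>2n/s$ is the (correct) detail it leaves implicit.
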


\begin{remark}
Similarly to the linear case in Theorem \ref{thm1-App}, it is natural to raise the question that whether or not the weighted bound in Theorem \ref{thmMTm} holds true for $p_0=2n/s$. This is an open question and would be our future research.
\end{remark}

\bigskip

\textbf{Acknowledgement.} The first and the third named authors were supported by Australian Research Council. The second named author was supported by the ERC StG-256997-CZOSQP.

\end{document}